\crefname{equation}{}{}
\DeclareSymbolFont{symbolsC}{U}{pxsyc}{m}{n}
\DeclareMathSymbol{\medcircle}{\mathbin}{symbolsC}{7}
\crefname{algocf}{Algorithm}{Algorithms}
\crefname{equation}{}{} 
\colorlet{refkey}{orange!20}
\colorlet{labelkey}{blue!30}
\crefname{algocf}{Algorithm}{Algorithms}
\numberwithin{equation}{section}
\newtheorem{theorem}{Theorem}[section]
\newtheorem{proposition}[theorem]{Proposition}
\newtheorem{lemma}[theorem]{Lemma}
\newtheorem{claim}[theorem]{Claim}
\crefname{claim}{Claim}{Claims}
\newtheorem*{question*}{Question}
\theoremstyle{definition}
\newtheorem{definition}[theorem]{Definition}
\newtheorem*{definition*}{Definition}
\theoremstyle{remark}
\newtheorem{remark}[theorem]{Remark}
\newcommand{\snorm}[1]{\lVert#1\rVert}
\newcommand{\sang}[1]{\langle #1 \rangle}
\newcommand{\mb}{\mathbb}
\newcommand{\mbm}{\mathbbm}
\newcommand{\mc}{\mathcal}
\newcommand{\mr}{\mathrm}
\newcommand{\on}{\operatorname}
\newcommand{\wt}{\widetilde}
\newcommand{\eps}{\varepsilon}
\let\originalleft\left
\let\originalright\right
\renewcommand{\left}{\mathopen{}\mathclose\bgroup\originalleft}
\renewcommand{\right}{\aftergroup\egroup\originalright}
\newif\ifpublic
\newcommand{\ignore}[1]{}
\title{Distribution of the threshold for the symmetric perceptron}
\author[A2]{Ashwin Sah}
\author[A3]{Mehtaab Sawhney}
\address{Department of Mathematics, Massachusetts Institute of Technology, Cambridge, MA 02139, USA}
\email{\{asah,msawhney\}@mit.edu}
\thanks{Sah and Sawhney were supported by NSF Graduate Research Fellowship Program DGE-1745302. Sah was supported by the PD Soros Fellowship. Sawhney was supported by the Churchill Foundation.}
\begin{document}
\maketitle
\begin{abstract}
We derive an explicit distribution for the threshold sequence of the symmetric binary perceptron with Gaussian disorder, proving that the critical window is of constant width.
\end{abstract}

\section{Introduction}\label{sec:introduction}
We fix a real number $\kappa>0$ throughout the rest of this paper.
\begin{definition}\label{def:model}
Let $\Sigma_n = \{\pm 1\}^n$. Given an infinite sequence $M=(M_j)_{j\ge 1}$ of vectors in $\mb{R}^n$, let
\[S_j(M):=\{x\in\Sigma_n\colon|\sang{x,M_j}|\le\kappa\sqrt{n}\},\qquad S^m(M):=\bigcap_{j=1}^mS_j(M).\]
Furthermore, define the \emph{threshold} $\tau=\tau(M)$ as the first index such that
\[S^{\tau-1}(M)\neq\emptyset\text{ ~and }S^\tau(M)=\emptyset,\]
or $+\infty$ if it does not exist.
Let $G=(G_j)_{j\ge 1}$ be a sequence of independent standard $n$-dimensional Gaussian vectors, i.e., $G_j\sim\mc{N}(0,I_n)$. The \emph{threshold for the symmetric binary perceptron with Gaussian disorder} is $\tau=\tau(G)$ (note $\tau(G)<+\infty$ almost surely).
\end{definition}

We note that the threshold as defined in \cref{def:model} is up to rescaling equivalent to the \emph{storage capacity} of the binary perceptron model. We have chosen to work with Gaussian disorder as it marginally simplifies certain computations. The limiting distribution can immediately be extended to Rademacher disorder, although various quantitative error terms will differ and the mean and variance of an associated log-normal distribution $Z^\ast$ must be slightly changed.

We will require the following basic special functions.

\begin{definition}\label{def:gaussian-special}
Let $Z,Z_1,Z_2\sim\mc{N}(0,1)$ be independent standard Gaussians. Let $p = \mb{P}[|Z|\le \kappa]$ and define the critical value 
\[\alpha_c:= - \log 2/\log p.\]
We also define
\[\mu_2 = \frac{\mb{E}[Z^2\mbm{1}_{|Z|\le\kappa}]}{p},\quad\beta = -\frac{\sqrt{\alpha_c}}{2}(1-\mu_2).\]
Finally we define the \emph{pair probability function}
\[q(\gamma) = \mb{P}\big[|\sqrt{\gamma}Z_1+\sqrt{1-\gamma}Z_2|\le \kappa\wedge|\sqrt{\gamma}Z_1-\sqrt{1-\gamma}Z_2|\le\kappa\big].\]
\end{definition}

We note that $-1/2<\beta<0$ follows from \cite[(3.8)]{Alt22}. We now state our main theorem. 
\begin{theorem}\label{thm:main}
Let $G$ and $\tau=\tau(G)$ be as in \cref{def:model}. There exists $\theta > 0$ such that the following holds for all sufficiently large $n$. Let $Z^\ast\sim\mc{N}(\frac{1}{4}\log(1-4\beta^2), -\frac{1}{2}\log(1-4\beta^2))$ and choose $k$ with $k + \alpha_cn\in\mb{Z}$. We have
\[\bigg|\mb{P}[\tau\le k + \alpha_cn] - \mb{E}\bigg[\exp\bigg(\frac{-e^{Z^{\ast}}p^k}{2}\bigg)\bigg]\bigg|\le n^{-\theta}.\]
\end{theorem}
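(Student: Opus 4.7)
The plan is to apply the Poisson paradigm with a log-normally distributed intensity, in the spirit of sharp satisfiability threshold results. Write $m=\alpha_cn+k$ and note that solutions come in antipodal pairs, so $P_m:=|S^m(G)|/2$ is integer-valued with $\{\tau\le m\}=\{P_m=0\}$ and annealed mean $\mb{E}[P_m]=p^k/2$. The overall strategy is to introduce an intermediate time $m_0:=\alpha_cn+k_0$ with $k_0=k_0(n)\to\infty$ slowly (polynomially slowly in $n$), and argue separately (i) that $P_{m_0}$, properly normalized, converges to $e^{Z^\ast}/2$ with polynomial rate, and (ii) that conditional on $\mc{F}_{m_0}:=\sigma(G_1,\ldots,G_{m_0})$, the terminal count $P_m$ is approximately Poisson with mean $P_{m_0}p^{k-k_0}$.

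For (i), set $W_0:=2P_{m_0}/p^{k_0}$, so $\mb{E}[W_0]=1$, and establish $W_0\to e^{Z^\ast}$ via the method of moments. For each integer $s\ge 1$, the raw moment $\mb{E}[(2P_{m_0})^s]$ decomposes as a sum over $s$-tuples $(x_1,\ldots,x_s)\in\Sigma_n^s$, each contributing the $s$-point joint survival probability raised to the power $m_0$. That survival probability depends only on the Gram matrix $\{\sang{x_i,x_j}/n\}$, and by the symmetry $\gamma\leftrightarrow 1-\gamma$ of $q$ one has $q(\tfrac{1+\rho}{2})=p^2+c\rho^2+O(\rho^3)$ with $c$ linked to $\beta$ via \cite{Alt22}. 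A Laplace/Gaussian expansion around the typical overlap $\rho\approx 0$ then produces $\mb{E}[W_0^s]\to(1-4\beta^2)^{-s(s-1)/4}=\mb{E}[e^{sZ^\ast}]$, with atypical overlaps suppressed by a large-deviation bound for $q$ near $\gamma=1/2$.

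For (ii), condition on $\mc{F}_{m_0}$ and let $T:=k-k_0$. The count $P_m$ is a sum of $P_{m_0}$ indicators, each of conditional probability $p^T$. The plan is to identify a good $\mc{F}_{m_0}$-event of probability $1-O(n^{-\theta})$ on which all but a negligible fraction of pairs of solutions in $S^{m_0}$ have overlap $|\sang{x,y}|/n\le n^{-1/2+o(1)}$, so that a conditional Chen--Stein (or direct factorial-moment) argument yields
\[d_{\mr{TV}}\bigl(\mc{L}(P_m\mid\mc{F}_{m_0}),\,\mr{Poisson}(P_{m_0}p^T)\bigr)\le n^{-\theta}.\]
The good event is secured by a conditional second-moment calculation restricted to overlap-constrained pairs, ruling out clusters of close solutions surviving to time $m_0$.

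Putting the pieces together, with $\lambda:=P_{m_0}p^T=W_0p^k/2$,
\[\mb{P}[P_m=0]=\mb{E}\bigl[\mb{P}[P_m=0\mid\mc{F}_{m_0}]\bigr]=\mb{E}[e^{-\lambda}]+O(n^{-\theta})=\mb{E}\bigl[e^{-e^{Z^\ast}p^k/2}\bigr]+O(n^{-\theta}),\]
using the $1$-Lipschitz property of $x\mapsto e^{-x}$ on $[0,\infty)$ together with the quantitative log-normal CLT of (i). The hardest step is the quantitative Poisson approximation in (ii), whose accuracy is driven by anomalously correlated pairs of surviving solutions; ruling these out at polynomial rate amounts to a conditional second-moment computation in a biased measure, where the fine properties of $q$ are decisive. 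Secondarily, running the method of moments at polynomial rather than merely distributional rate in (i) requires careful error tracking in the Gaussian expansion over $s$-tuples, with $s$ allowed to grow mildly with $n$.
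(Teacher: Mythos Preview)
Your two-stage architecture (freeze at a pre-threshold time $m_0$, then exploit near-independence of survivals over the remaining $O(\log n)$ slices) matches the paper's. Two issues. First, a sign slip: you need $k_0\to-\infty$ so that $m_0<\alpha_cn$ and $\mb{E}[P_{m_0}]=p^{k_0}/2$ is polynomially large rather than vanishing; the paper takes $m_0=\lfloor\alpha_cn-\eta\log n\rfloor$.

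The real gap is step (i). The log-normal distribution is \emph{not} determined by its moments, so establishing $\mb{E}[W_0^s]\to(1-4\beta^2)^{-s(s-1)/4}=\mb{E}[e^{sZ^\ast}]$ for all integer $s$ does not imply $W_0\Rightarrow e^{Z^\ast}$, let alone with a polynomial rate. Letting $s$ grow with $n$ does not rescue this: the target moments grow like $e^{cs^2}$, the Taylor series of $\theta\mapsto\mb{E}[e^{-\theta W_0}]$ about $0$ has zero radius of convergence, and there is no moment-based inequality that controls $|\mb{E}[e^{-W_0 p^k/2}]-\mb{E}[e^{-e^{Z^\ast}p^k/2}]|$ from integer moments of $W_0$ alone. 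The paper avoids this by \emph{identifying the fluctuation} rather than matching moments of $W_0$: it introduces explicit ``cycle count'' polynomials $Y_K(m,G)$ in the disorder, shows by a tilted first/second moment computation that $X(G)e^{-Y_K}/\mb{E}X(G)$ concentrates at $1$ (Chebyshev), and separately proves a quantitative CLT $Y_K\Rightarrow Z^\ast$ via exchangeable-pairs Stein. The point is that the Gaussian target for $Y_K$ \emph{is} moment-determined (and Stein gives a rate directly), while the passage $W_0\approx e^{Y_K}$ is handled by concentration of a ratio whose second moment genuinely matches its first moment squared. Your step (ii) is fine in spirit; the paper executes it slightly differently, bounding $\on{TV}(\mc{N}(0,MM^T/n),\mc{N}(0,I))$ via KL so that survival events become exactly independent under the comparison measure, yielding $\mb{P}[S_t=\emptyset\mid S_{m_0}]=(1-p^{t-m_0})^{X(G)/2}\pm n^{-\Omega(1)}$ without invoking a Poisson limit theorem.
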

\begin{remark}\label{rmk:Lp}
The polynomial rate specified by \cref{thm:main} is essentially the limit of our method. Combining the statement of \cref{thm:main} with the tails bounds given by Altschuler \cite[Theorem~2]{Alt22}, one can prove that there exists $C$ such that for any fixed $q\ge 1$ and $n$ large in terms of $q$:
\[\mb{E}|\tau-\alpha_cn|^q\le C^q.\]
We also note that \cref{thm:main} disproves \cite[Conjecture~1]{Alt22} since the upper tail $\mb{P}[\tau\ge k+\alpha_cn]$ is of exponential type in $k>0$.
\end{remark}

\subsection{Previous work}
A toy model for neural networks, the (asymmetric) perceptron model has been a model of substantial interest, in part stemming from conjectures on the asymptotic threshold due to Krauth and Mezard \cite{KM89}. Despite progress on a number of related problems, progress on the perceptron has proved considerably more difficult. In this direction, groundbreaking work of Ding and Sun \cite{DS19} establishes the conjectural lower bound for the threshold and recent work of Xu \cite{Xu21} has established an analog of Friedgut's sharp threshold theorem \cite{Fri99} for such models (see work of Nakajima and Sun \cite{NS22} for an alternate proof). In particular these sharp threshold results establish that for general perceptron models, $\tau$ typically lives within an interval of length $o(n)$.

The symmetric perceptron model, introduced in work of Aubin, Perkins, and Zdeborov\'{a} \cite{APZ19}, has proven to be considerably more tractable at a mathematical level due the availability of the moment method applied directly to the number of solutions (while for the asymmetric perceptron model a substantially more involved conditioning scheme is required). It provides a test-bed for various conjectural phenomena since the solution space geometries of symmetric and asymmetric perceptron are expected to behave similarly. In particular, work of Aubin, Perkins, and Zdeborov\'{a} \cite{APZ19} established that w.h.p.~$\tau < (\alpha_c+\epsilon)n$ and with positive probability that $\tau > (\alpha_c-\epsilon)n$.
The determination of the threshold (with scaling window $o(n)$) at $\alpha_cn$ with high probability was proven independently by Abbe, Li, and Sly \cite{ALS21} and Perkins and Xu \cite{PX21}\footnote{We note that the work of Aubin, Perkins, and Zdeborov\'{a} \cite{APZ19} and Perkins and Xu \cite{PX21} was conditional on a numerical hypothesis which was verified in the work of  Abbe, Li, and Sly \cite{ALS21}.}. Both the works of Abbe, Li, and Sly \cite{ALS21} and Perkins and Xu \cite{PX21} establish that $\log |S^m(G)|$ concentrates around $\log\mb{E}[|S^m(G)|]$. Furthermore, the work of Abbe, Li, and Sly \cite{ALS21} establishes that $S^{m}(G)$ has a log-normal distribution (when $m$ is linearly bounded away from the threshold) via a modification of ``small subgraph conditioning'' made suitable for dense models. Finally, in recent work Altschuler \cite{Alt22} substantially improved the concentration window by proving that the first and second moment of number of solutions at $\lceil \alpha_c n\rceil$ match up to a constant. Adapting the argument for concentration given by Perkins and Xu \cite{PX21}, Altschuler derived that the window of concentration is $O(\log n)$ w.h.p. We refer the reader to \cite[Section~2]{Alt22} for a more extensive discussion of previous works on the threshold, discussion of threshold for more general constraint satisfaction problems, and related work.

We also note that there has recently been work on understanding efficient algorithms for the symmetric perceptron; in particular work of Abbe, Li, and Sly \cite{ALS22} proves that efficient algorithms can find exponentially rare clusters at sufficiently low density and work of Gamarnik, K{\i}z{\i}lda{\u{g}}, Perkins, and Xu \cite{GKPX22} examines the limits of efficient algorithms for the symmetric perceptron through the lens of the multi-Overlap Gap Property.

\subsection{Stopping time and Poisson sampling heuristic}\label{sub:outline}
The key ingredient our in work is an early stopping time argument, which combined with an understanding of the geometry of the remaining solutions is sufficient to derive a distribution for the threshold. More precisely, define the (non-random) cutoff time $\tau_\mr{pre} = \lfloor \alpha_c n - \eta\log n\rfloor$ for some sufficiently small constant $\eta>0$. By mimicking the techniques of Abbe, Li, and Sly \cite{ALS21} (for Gaussian disorder), one can prove that the number of solutions to the perceptron at $\tau_\mr{pre}$ follows a log-normal distribution; we note here that the fact that the second and first moments match so close to the threshold is a critical insight found in the work of Altschuler \cite{Alt22}. Furthermore, the moment computations of Altschuler (in particular the ``frozen'' nature of typical solutions) can be used to prove that all pairs of solutions are essentially orthogonal (e.g.~have dot product bounded in magnitude by $n^{1/2}\log n$).

The crucial idea at this point is to note that since w.h.p.~at $\tau_\mr{pre}$ there are a small polynomial number of solutions, the pairwise dot product of solutions being near zero can be translated to near-independence of the event of whether these solutions will survive the next slices. In particular, the total variation distance between the true distribution and a natural product distribution is polynomially small. The desired result then follows noting that there are only logarithmically more steps and thus the computation of the threshold sequence reduces to a computation with product distributions: we have some set $S$ of solutions at time $\tau_\mr{pre}$ and each solution is retained for $t-\tau_\mr{pre}$ extra slices with probability $p^{t-\tau_\mr{pre}}$, so an independence heuristic suggests that $\mb{P}[\mr{Bin}(|S|,p^{t-\tau_\mr{pre}})=0]$ is roughly the chance that we empty the solution set in $t$ steps, where $\mr{Bin}$ denotes a binomial random variable. In our regime, this can be approximated via a Poissonian heuristic that gives rise to the expression in \cref{thm:main} (after incorporating the fact that $|S|/\mb{E}|S|$ satisfies a log-normal distribution). We note that the strange factor of $2$ arising in the distribution in \cref{thm:main} stems from the fact that $v\in S$ if and only if $-v\in S$.

We note that in implementing the above sketch in order to give polynomial error rates, the primary difficulty is quantifying the work of Abbe, Li, and Sly \cite{ALS21} (and adjusting it to both handle when the number of hyperplanes is near the threshold and to allow for Gaussian disorder). In order to provide a quantitative rate we transform the necessary cycle count distributional claims under conditional models to distributional questions about unconditioned Gaussian polynomials (via orthogonal invariance of Gaussians and various 
expectation and variance computations) and then provide quantitative rates via Stein's method (in paricular through the use of exchangeable pairs), rather than the method of moments used in \cite{ALS21}.
We note one curious feature is that the distribution of solutions at $\tau_\mr{pre}$, while still being log-normal, does not have the same mean and variance parameters as the Rademacher case derived in \cite{ALS21}. The difference ultimately arises from the fact that if $M\sim\mc{N}(0,1)$, then $M^2$ has nontrivial variance whereas $M^2$ is constant if $M\sim\mr{Unif}(\{\pm 1\})$. (This difference manifests in the need to track the degenerate $2$-cycle count corresponding to $k=1$ in \cref{def:general-cycle-count}; we note that the $k=1$ expression in the Rademacher case would be deterministic, hence the difference.)

We note that the methods of this paper appear to more generally applicable; in particular, our approach can likely be adapted give a precise characterization of the threshold for solutions in constrained $k$-XORSAT which was shown to have an $O(1)$-size scaling window by Pittel and Sorkin \cite{PS16} (see also \cite{DM02,DGMMPR10}). We intend to return to this subject in future work.

\subsection*{Organization}
The remainder of the paper is organized as follows. In \cref{sec:proof-main}, we prove \cref{thm:main} conditional on the ``strong freezing'' of the solution space and on the log-normality of the number of solutions (\cref{prop:moment-output}). This argument implements the main thrust of the paper outlined in \cref{sub:outline}. We reduce \cref{prop:moment-output} to a concrete moment computation (\cref{lem:second-moment}) in \cref{sub:moment-reduction}. In \cref{sec:moment-prelim}, we state preliminaries used to verify the moment computation including a suitable quantification of cycle count convergence as defined in the work of Abbe, Li, and Sly \cite{ALS21}. Finally, in \cref{sec:moment}, we prove the necessary moment estimates to imply \cref{prop:moment-output}, which in turn closely follows the work of Altschuler \cite{Alt22} and Abbe, Li, and Sly \cite{ALS21}.

\subsection*{Acknowledgements}
We thank Will Perkins for bringing this problem to our attention and pointing out the work of Pittel and Sorkin \cite{PS16}, and for various useful comments on the draft. We also thank Mark Sellke for finding a number of typos and Michael Ren for useful conversations.

\subsection*{Notation}
All dependences of notation or asymptotics on $\kappa$ have been suppressed, and we treat it as a fixed absolute constant throughout the paper. We write $f=O(g)$ to mean that $f\le Cg$ for some absolute constant $C$, and $g=\Omega(f)$ and $f\lesssim g$ to mean the same. We write $f=o(g)$ if for all $c > 0$ we have $f\le cg$ once the implicit growing parameter (typically $n$) grows large enough, and $g=\omega(f)$ means the same. Furthermore throughout this paper all logarithms are base $e$.

For positive semidefinite $\Sigma\in\mb{R}^{d\times d}$ we let $\mc{N}(0,\Sigma)$ be the Gaussian vector with covariance matrix $\Sigma$. A common choice is $\Sigma=I_d$, the standard Gaussian vector with identity covariance. For a discrete random variable $X$ with nonzero probabilities $p_1,\ldots,p_m$ for its atoms, its entropy is
\[H(X)=-\sum_{i=1}^mp_i\log p_i.\]
The Kullback--Leibler divergence between $X,Y$ defined on the same atom set with probabilities $q_i,r_i$ respectively is
\[\on{KL}(X\parallel Y)=\sum_{i=1}^mq_i\log\frac{q_i}{r_i}\]
and we define the real function $H(x) = H(\mr{Ber}(x)) = -x\log x-(1-x)\log(1-x)$. If $X,Y$ are supported on $\mb{R}^d$ with probability densities $q,r$ their KL divergence is
\[\on{KL}(q\parallel r)=\int_{\mb{R}^d}q(x)\log\frac{q(x)}{r(x)}dx.\]
For real matrices $A$ and $B$, let
\[\langle A, B\rangle= \on{tr}(A^TB),\qquad \|A\|_{\mr{HS}} = \sqrt{\on{tr}(A^TA)} = \sqrt{\langle A, A\rangle}.\]
Furthermore for an order $k$ tensor define
\[\|A\|_{\mr{op}} = \sup_{|v_i|=1}|A(v_1,v_2,\ldots,v_k)|.\] Finally for $f\in\mc{C}^k(\mathbb{R}^n)$ define the $k^{th}$ derivative (tensor) operators evaluated at $u_1,\ldots,u_k\in\mb{R}^n$ as 
\[\langle D^kf(x),(u_1,\ldots,u_k)\rangle = \sum_{i_1,i_2,\ldots,i_k\in [n]}\frac{\partial^k f}{\partial x_{i_1}\ldots \partial x_{i_k}}(u_1)_{i_1}\ldots(u_k)_{i_k}\]
and define
\[M_r(g) = \sup_{x\in\mathbb{R}^n} \|D^rg(x)\|_\mr{op}.\]
For a matrix $A$ define $\snorm{A}_{\mr{HS}}^2=\sum_{i,j}|a_{ij}|^2$.

\section{Proof of \texorpdfstring{\cref{thm:main}}{Theorem 1.3}}\label{sec:proof-main}
In order to derive our main result we will assume the following pair of structural properties of the solution space and its size. Here and beyond, we fix some $\eta>0$ that will be chosen small in terms of only $\kappa$ at the end of the argument. For now, we leave it unspecified in the various statements that appear, merely using that it is sufficiently small in the proofs.
\begin{proposition}\label{prop:moment-output}
There exists $\gamma>0$ such that the following holds. Let $\tau_\mr{pre}=\lfloor\alpha_cn-\eta\log n\rfloor$. For $n$ sufficiently large, we have the following pair of estimates on the size of the solution space $X(G):=|S_{\tau_\mr{pre}}(G)|$ for $n$ sufficiently large.
\begin{enumerate}[1.]
\item With probability at least $1-n^{-1}$ there do not exist $x_1,x_2\in S_{\tau_\mr{pre}}(G)$ such that $|\sang{x_1,x_2}|\in [n^{1/2}\log n, n-1]$.
\item Let $Z^\ast = \mc{N}(\frac{1}{4}\log(1-4\beta^2), -\frac{1}{2}\log(1-4\beta^2))$. Then for all $t\in\mb{R}$, 
\[\mb{P}\bigg[\frac{X(G)}{\mb{E}X(G)}\ge e^t\bigg] = \mb{P}[Z^\ast\ge t] \pm n^{-\gamma}.\]
\end{enumerate}
\end{proposition}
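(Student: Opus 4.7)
My plan is to prove Parts~1 and~2 separately, each reducing to a moment computation.

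\medskip

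For \textbf{Part~1}, I would apply Markov's inequality to the expected count of ordered pairs $(x_1,x_2)\in S_{\tau_\mr{pre}}(G)^2$ with $|\sang{x_1,x_2}|\in[n^{1/2}\log n,n-1]$. The expectation at overlap $\sigma$ equals
\[2^n\binom{n}{(n+\sigma)/2}\cdot q\bigl((1+\sigma/n)/2\bigr)^{\tau_\mr{pre}},\]
and combining $2^np^{\alpha_cn}=1$ with (a)~strict negativity of the second-moment exponent $f(\gamma):=\log 2+H((1+\gamma)/2)+\alpha_c\log q((1+\gamma)/2)$ for $|\gamma|$ bounded away from $0$ and $1$ (the content of Altschuler's analysis \cite{Alt22}, resting on the numerical input of \cite{ALS21}), (b)~the quadratic vanishing $f(\gamma)\sim f''(0)\gamma^2/2$ at $\gamma=0$ with $f''(0)<0$, and (c)~a strong-freezing estimate ruling out $\sigma$ within $n^{o(1)}$ of $\pm n$ (using that few-coordinate flips cannot preserve all $\tau_\mr{pre}$ constraints), the total expected count over the bad range is $\ll n^{-2}$. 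Markov's inequality then completes the argument.

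\medskip

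For \textbf{Part~2}, I would implement a quantitative version of the small-subgraph conditioning scheme of Abbe--Li--Sly \cite{ALS21}, adapted to Gaussian disorder and strengthened to give polynomial rates. Introduce cycle-count statistics $Y_k(G)$ (defined later in \cref{def:general-cycle-count}) capturing short-range correlations among solutions, and establish a decomposition
\[\log\frac{X(G)}{\mb{E}X(G)}=\sum_{k\le K}c_k\bigl(Y_k(G)-\mb{E}Y_k(G)\bigr)+R(G),\]
where $K=K(n)$ is a slowly growing truncation and the $c_k$ are coefficients chosen so that the weighted cycle sum has the drift and variance of $Z^\ast$. The proof then reduces to (i)~joint Gaussian approximation of $(Y_1,\ldots,Y_K)$ at rate $n^{-\Omega(1)}$, and (ii)~an $L^2$ control on $R(G)$ via the moment matching $\mb{E}X^2/(\mb{E}X)^2=\mb{E}e^{2Z^\ast}+o(n^{-\gamma})$, which is deferred to \cref{lem:second-moment}. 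The $k=1$ term, deterministic in the Rademacher setting but not the Gaussian one since $G_{ji}^2$ has positive variance, must be retained and contributes the drift $\tfrac14\log(1-4\beta^2)$ of $Z^\ast$.

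\medskip

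The \textbf{main obstacle} is item~(i), the polynomial-rate joint CLT for the $Y_k$. The method of moments used in \cite{ALS21} gives only qualitative convergence, so, as anticipated in \cref{sub:outline}, I would apply Stein's method via an exchangeable pair obtained by resampling a single disorder row $G_j$. First, I would invoke orthogonal invariance of the Gaussian disorder to re-express each $Y_k$ as a polynomial in unconditioned standard Gaussian variables (conditional on a typical solution $x$, each $G_j$ decomposes into components parallel and perpendicular to $x$). Then moment bounds on the Stein discrepancies $\mb{E}[Y_k(G)-Y_k(G')\mid G]$ and on the conditional variance yield Gaussian approximation with error $n^{-\Omega(1)}$ for smooth test functions. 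Finally, the truncation $K=K(n)$ must be chosen large enough that $\sum_{k>K}c_k^2\on{Var}(Y_k)=o(n^{-\gamma})$, which follows from exponential decay of $c_k$ in $k$ (governed by the bound $|\beta|<1/2$ noted after \cref{def:gaussian-special}).
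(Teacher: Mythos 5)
Your proposal takes essentially the same approach as the paper: Part~1 is a first-moment bound on pair counts at overlap $t$ (using the special-function estimates on $H+\alpha_c\log q$, including the near-$t=\pm n$ bound) followed by Markov, and Part~2 is small-subgraph conditioning via weighted cycle counts, with a quantitative multivariate CLT proved by Stein's method with exchangeable pairs after reducing the planted distributions $\Delta_1,\Delta_2(t)$ to $\Delta_0$ by the Gaussian orthogonal decomposition, together with $L^2$ control of the residual via the weighted second-moment bound. One small divergence: the paper's exchangeable pair resamples a single \emph{entry} $G_{j,i}$ rather than a full row $G_j$; both produce a diagonal linear drift $\mb{E}[V'-V\mid V]=-\Lambda V$, but entry-resampling makes $\|V'-V\|$ smaller and makes the $\mb{E}\snorm{E'}_{\mr{HS}}$ bound reduce to counting pairs of cycles sharing a single edge rather than a whole row, which is cleaner; row-resampling plausibly works but with a more involved overlap analysis. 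A minor slip in your exposition: the mean $\tfrac14\log(1-4\beta^2)=-\sum_{k\ge 1}(2\beta)^{2k}/(4k)$ is not the $k=1$ contribution alone; the Gaussian-vs-Rademacher difference is that Rademacher drops the $k=1$ term, shifting mean and variance by $(2\beta)^2/4$ and $(2\beta)^2/2$ respectively.
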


We will also require the following tail bounds on the number of solutions. The second appears explicitly as \cite[Theorem~3]{Alt22} which in turns builds closely on work of Perkins and Xu \cite{PX21}.

\begin{proposition}[{\cite[Theorem~3]{Alt22}}]\label{prop:tails}
We have the following.
\begin{enumerate}[1.]
\item For all $t\ge 0$ we have
\[\mb{P}[|S_t(G)|\ge 1]\le 2^np^t.\]
\item There exists a constant $c=c_{\ref{prop:tails}}>0$ such that following holds. Fix $H\ge c^{-1}$. Suppose that $n$ is sufficiently large in terms of $H$. Then
\[\mb{P}[S_{\lfloor\alpha_cn-H\log n\rfloor}(G) = \emptyset]\le\exp(-cH\log n).\]
\end{enumerate}
\end{proposition}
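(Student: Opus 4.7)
This is a direct first-moment computation. For any fixed $x\in\Sigma_n$, since $\snorm{x}=\sqrt n$ and $G_j\sim\mc{N}(0,I_n)$, the inner product $\sang{x,G_j}/\sqrt n$ is standard Gaussian, so $\mb{P}[x\in S_j(G)]=p$. Independence across $j$ and linearity of expectation give $\mb{E}|S_t(G)|=2^np^t$, and Markov's inequality yields $\mb{P}[|S_t(G)|\ge 1]\le 2^np^t$.

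\textbf{Part 2: second moment at the threshold.} Setting $m=\lfloor\alpha_cn-H\log n\rfloor$, the first moment $\mb{E}|S_m(G)|=2^np^m$ is polynomially large in $n$, of order $n^{H\log(1/p)}$ (using $p^{\alpha_c}=1/2$). To bound $\mb{P}[S_m(G)=\emptyset]$, the plan is to expand
\[\mb{E}|S_m(G)|^2=\sum_{x_1,x_2\in\Sigma_n}\prod_{j=1}^m\mb{P}\bigl[|\sang{x_1,G_j}|\le\kappa\sqrt n,\,|\sang{x_2,G_j}|\le\kappa\sqrt n\bigr].\]
By orthogonal invariance, the $j$-th factor depends only on the overlap $\rho=\sang{x_1,x_2}/n$ and, after rescaling, equals $q((1+\rho)/2)$ in the notation of \cref{def:gaussian-special}. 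Grouping by overlap turns the sum into an exponential integral whose dominant contribution (via Laplace) comes from $\rho\approx 0$, where $q(1/2)=p^2$ and the pair probability factorizes to match $(\mb{E}|S_m|/2^n)^2$ exactly. The crucial input is that, uniformly for $m$ up to and slightly past $\alpha_cn$, the ratio $\mb{E}|S_m|^2/(\mb{E}|S_m|)^2$ stays bounded by an absolute constant $C$; this is precisely Altschuler's Theorem~2 and rests on the ``frozen'' structure of typical solutions which suppresses all $\rho\neq 0$ modes. Paley--Zygmund then yields $\mb{P}[|S_m(G)|\ge 1]\ge 1/(4C)$, a constant-order lower bound.

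\textbf{Part 2: from constant probability to polynomial rate.} To upgrade from a constant bound to $\exp(-cH\log n)$, I would invoke concentration of $\log|S_m(G)|$ about $\log\mb{E}|S_m(G)|$ on an $O(1)$ scale, following the line of argument of Perkins--Xu as sharpened by Altschuler. One implementation is Gaussian Poincaré/Talagrand concentration applied to a smooth surrogate for $\log(1+|S_m(G)|)$ viewed as a function of the Gaussian disorder, with Lipschitz constant controlled via the second-moment estimate; since $\mb{E}\log|S_m(G)|\gtrsim H\log n$ while fluctuations are only $O(1)$, the probability that $|S_m(G)|=0$ decays polynomially. An alternative is a two-stage conditioning argument: at $m_0=\lfloor\alpha_cn-H_0\log n\rfloor$ for a sufficiently large absolute constant $H_0$, the second-moment analysis guarantees (with constant probability) both nonemptiness and near-pairwise-orthogonality of all surviving solutions, and then the final $(H-H_0)\log n$ hyperplanes kill each survivor nearly independently with success probability $1-p$, so a Chernoff estimate over the polynomially many survivors gives the polynomial tail. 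The main obstacle is twofold: maintaining the bounded second-moment ratio all the way up to $\alpha_cn$ (which naive entropy estimates miss and which is essentially Altschuler's main technical theorem), and then converting this uniform boundedness into a genuine concentration estimate strong enough to beat a polynomially large mean — Chebyshev alone only gives constant order, so a sub-Gaussian or log-concavity-type input is essential.
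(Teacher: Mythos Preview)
Your Part 1 is exactly right and matches the paper: this is the one-line first-moment/Markov bound. There is nothing more to say.

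For Part 2, note first that the paper does not prove this statement at all; it simply quotes it as \cite[Theorem~3]{Alt22} (building on Perkins--Xu). So there is no ``paper's proof'' to compare against beyond the citation. Your write-up correctly identifies the two layers of the Altschuler argument --- the bounded second-moment ratio at the threshold (which you correctly attribute to Altschuler's main technical theorem) giving Paley--Zygmund constant-probability nonemptiness, and then a separate concentration step to upgrade to the polynomial rate $\exp(-cH\log n)$. You are also honest that the second step is where the real content lies.

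That said, neither of your two proposed upgrade mechanisms works as written. For the Gaussian Poincar\'e/Talagrand route: $|S_m(G)|$ is a sum of $2^n$ indicators of slabs, and a smooth surrogate for $\log(1+|S_m|)$ does not have a usefully small Lipschitz constant in the disorder --- a small perturbation of a single $G_{j}$ can flip an uncontrolled number of indicators near the slab boundary, so the naive gradient bound blows up. For the two-stage conditioning route: there is a directional mix-up. If $H_0$ is a fixed constant and $H\ge H_0$, then $m\le m_0$ and $S_{m_0}\subseteq S_m$, so constant-probability nonemptiness at $m_0$ gives only the \emph{same} constant-probability nonemptiness at $m$; you cannot manufacture the $H$-dependent rate $n^{-cH}$ this way. (Your phrase ``the final $(H-H_0)\log n$ hyperplanes kill each survivor'' is the forward-in-time heuristic used in the paper's main argument to show $S_t$ \emph{empties} past $\alpha_cn$; it runs the wrong direction for proving $S_m$ stays nonempty.) More fundamentally, any scheme that begins from a constant-probability event and then reveals further independent randomness cannot beat that constant without an additional boosting ingredient.

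In short: your sketch accurately locates where the difficulty sits and defers to the correct references, which is essentially what the paper itself does. But as a self-contained proof, the upgrade step remains a genuine gap, and the specific mechanisms you propose would need to be replaced by the actual Perkins--Xu/Altschuler concentration argument.
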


We require the following basic bound on the total variation distance between a pair of normal distributions.
\begin{lemma}\label{lem:KL}
For positive definite symmetric matrices $\Sigma_1,\Sigma_2\in\mb{R}^{d\times d}$, we have 
\[2\on{TV}(\mc{N}(0,\Sigma_1),\mc{N}(0,\Sigma_2))^2\le \on{KL}(\mc{N}(0,\Sigma_1)\parallel\mc{N}(0,\Sigma_2)) = \frac{1}{2}\big(\log(\det(\Sigma_2\Sigma_1^{-1}))-d+\on{tr}(\Sigma_2^{-1}\Sigma_1)\big).\]
\end{lemma}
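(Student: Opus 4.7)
The lemma splits naturally into two independent assertions: Pinsker's inequality $2\,\on{TV}(P,Q)^2\le\on{KL}(P\parallel Q)$ applied to the specific choice $P=\mc{N}(0,\Sigma_1)$, $Q=\mc{N}(0,\Sigma_2)$, and the closed form for the KL divergence between centered Gaussians. I would dispense with Pinsker's inequality by citing (or briefly recalling) its standard proof: by the data processing inequality for both total variation and KL divergence, pushing forward under the indicator of the set $A=\{x:p(x)\ge q(x)\}$ reduces the claim to the case of two Bernoulli distributions; the Bernoulli case in turn follows from an elementary calculus argument bounding $(p-q)^2$ by $\frac{1}{2}(p\log(p/q)+(1-p)\log((1-p)/(1-q)))$. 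This step is standard and is the less novel half of the lemma.

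For the closed-form expression, the plan is a direct density computation. Write
\[p_i(x)=(2\pi)^{-d/2}(\det\Sigma_i)^{-1/2}\exp\left(-\tfrac{1}{2}x^T\Sigma_i^{-1}x\right),\qquad i=1,2,\]
and then
\[\log\frac{p_1(x)}{p_2(x)}=\tfrac{1}{2}\log\frac{\det\Sigma_2}{\det\Sigma_1}-\tfrac{1}{2}x^T(\Sigma_1^{-1}-\Sigma_2^{-1})x.\]
Taking expectation under $X\sim\mc{N}(0,\Sigma_1)$, the key identity is the trace trick $\mb{E}[X^TAX]=\on{tr}(A\Sigma_1)$ for any symmetric $A$, which yields $\mb{E}[X^T\Sigma_1^{-1}X]=d$ and $\mb{E}[X^T\Sigma_2^{-1}X]=\on{tr}(\Sigma_2^{-1}\Sigma_1)$. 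Assembling the terms gives precisely
\[\on{KL}(\mc{N}(0,\Sigma_1)\parallel\mc{N}(0,\Sigma_2))=\tfrac{1}{2}\bigl(\log\det(\Sigma_2\Sigma_1^{-1})-d+\on{tr}(\Sigma_2^{-1}\Sigma_1)\bigr).\]

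There is essentially no genuine obstacle here—both steps are textbook. The only mild subtlety worth noting is that positive definiteness of $\Sigma_1$ and $\Sigma_2$ is needed so that the densities exist, the inverses are well-defined, and the determinants are positive (so the logarithm in the formula is real), which matches the hypothesis of the lemma. If brevity is desired, I would state Pinsker's inequality as a known fact with a citation and present only the Gaussian KL computation in detail.
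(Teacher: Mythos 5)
Your proposal is correct and follows essentially the same route as the paper, which simply cites Pinsker's inequality and the standard closed-form Gaussian KL computation; you spell out the textbook arguments the paper delegates to references, but there is no methodological difference.
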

\begin{proof}
The first inequality is by Pinsker's inequality (see e.g.~\cite[Lemma~2]{Can22}), holding for general distributions. The equality is a straightforward Gaussian integral (see e.g.~\cite[Page~13]{Duc}).
\end{proof}

Now we prove \cref{thm:main} given these inputs.
\begin{proof}[Proof of \cref{thm:main}]
Let $\tau_{\mr{pre}}$, $\eta$ and $X(G)$ be as in \cref{prop:moment-output}. Let 
\[\mc{E}_1 = \{X(G)\ge \mb{E}[X(G)]\cdot \exp((\log n)^{3/4})\} \cup \{X(G)\le \mb{E}[X(G)]\cdot\exp(-(\log n)^{3/4})\}\]
and 
\[\mc{E}_2 = \{|\sang{x_1,x_2}|\in [n^{1/2}\log n,n-1]:x_1,x_2\in S_{\tau_\mr{pre}}\}.\]
We now reveal set $S = S_{\tau_{\mr{pre}}}(G) \cap \{x:x_1 = 1\}$ and note that $\mc{E}_1$, $\mc{E}_2$, and $X(G)$ are deterministic given $S_{\tau_{\mr{pre}}}(G)$. We note that \cref{prop:moment-output} implies that $\mb{P}[\mc{E}_1 \cup \mc{E}_2] \le n^{-\Omega(1)}$ (since $|Z^\ast|\le(\log n)^{3/4}$ occurs with super-polynomially good probability). We condition on a revelation of $S$ such that $\mc{E}_1^c\cap\mc{E}_2^c$ holds. 

We now consider $G_t$ for $t\in(\tau_{\mr{pre}}, \tau_{\mr{pre}} + 2\eta\log n]$ and consider the distribution induced by 
\[\{\sang{G_t,x_i}/\sqrt{n}: x_i\in S, t\in(\tau_{\mr{pre}}, \tau_{\mr{pre}} + 2\eta\log n]\}.\]
Note that $G_t$ are independent for $t\in(\tau_{\mr{pre}}, \tau_{\mr{pre}} + 2\eta\log n]$. Let $M$ denote a $X(G)/2$ by $n$ matrix where each row corresponds to an entry of $S$. Notice that for each $t\in(\tau_{\mr{pre}}, \tau_{\mr{pre}} + \lceil 2\eta\log n\rceil]$ we have that
\[\{\sang{G_t,x_i}/\sqrt{n}: x_i\in S\} \sim \mc{N}(0, MM^T/n).\]
The crucial point is that since $MM^{T}/n$ is sufficiently diagonally dominant we will be able to prove that the distribution is close in total variation distance to $\mc{N}(0,I_{|S|})$. To prove this, note that 
\begin{align*}
\snorm{MM^{T}/n-I_{|S|}}_{\mr{op}}^2&\le \snorm{MM^{T}/n-I_{|S|}}_{\on{F}}^2\le |S|^2\max_{\substack{x_1,x_2\in S\\x_1\neq x_2}}|\sang{x_1,x_2}|^2/n^2\\
&\le |S|^2\cdot n^{-3/4}\le n^{-1/2}
\end{align*}
due to $\mc{E}_2^c$ holding and the fact that $S$ does not simultaneously contain any pair $\{x,-x\}$.

This immediately implies that all the eigenvalues of $MM^{T}/n$ are of the form $1\pm n^{-1/4}$ and therefore by \cref{lem:KL} we have that 
\begin{align*}
\on{TV}(\mc{N}(0,I_{|S|}),\mc{N}(0, MM^T/n))&\le (\on{Tr}(MM^T/n)-|S| + \log(\det(MM^T/n)))/2\\
&\le |S|n^{-1/4} + |S|\log(1+n^{-1/4})\le n^{-1/5}.
\end{align*}
The last inequality comes from $|S|=X(G)/2\le\mb{E}X(G)\cdot\exp((\log n)^{3/4})\le n^{O(\eta)}$ by the choice of $\tau_\mr{pre}$, and assuming $\eta$ is small enough.

As we are restricting attention to $t\in(\tau_{\mr{pre}},\tau_{\mr{pre}} + \lceil 2\eta\log n\rceil]$ we have that 
\[\on{TV}((\{\sang{G_t,x_i}/\sqrt{n}: x_i\in S\})_{t\in(\tau_{\mr{pre}}, \tau_{\mr{pre}} + \lceil 2\eta\log n\rceil]},\mc{N}(0,I_{|S|})^{\otimes\lceil 2\eta\log n\rceil}))\le n^{-1/6}.\]

Since $S$ satisfies $\mc{E}_1^c\cup\mc{E}_2^c$, this implies that for $t\in(\tau_{\mr{pre}}, \tau_{\mr{pre}} + \lceil 2\eta\log n\rceil]$ we have
\[\mb{P}[S_t = \emptyset| S_{\tau_{\mr{pre}}}] = (1-p^{t-\tau_{\mr{pre}}})^{X(G)/2} \pm n^{-1/6}.\]

We now choose a constant $\rho>0$ which is sufficiently small with respect to $\eta$. We first isolate the case where $t\in [\lceil\alpha_cn - \rho \log n\rceil, \lceil\alpha_cn + \rho \log n\rceil]$. Notice that we therefore have that 
\begin{align*}
\mb{P}[S_t = \emptyset] &= \mb{P}[S_t = \emptyset \cap (\mc{E}_1^c\cap \mc{E}_2^c)] \pm \mb{P}[\mc{E}_1 \cup \mc{E}_2]\\
&= \mb{P}[S_t = \emptyset \cap (\mc{E}_1^c \cap \mc{E}_2^c)] \pm n^{-\Omega(1)}\\
&= \mb{E}[(1-p^{t-\tau_{\mr{pre}}})^{X(G)/2}\mbm{1}_{\mc{E}_1^c\cap \mc{E}_2^c}] \pm n^{-\Omega(1)}\\
&= \mb{E}\bigg[\exp\bigg(\frac{-p^{t-\tau_{\mr{pre}}}\cdot X(G)}{2}\bigg)\mbm{1}_{\mc{E}_1^c\cap \mc{E}_2^c}\bigg] \pm n^{-\Omega(1)}.
\end{align*}
The final equality requires justification: note that $(1-p)^{t-\tau_{\mr{pre}}})^{X(G)}\le 1$, $1-x=e^{-x+O(x^2)}$ for all $|x|\le 1/2$, and
\[\exp(p^{2(t-\tau_{\mr{pre}})}\cdot X(G))\le 1+n^{-\Omega(1)}\]
by using that $\mc{E}_1^c$ holds and some basic computation.

Let $Z^\ast$ be as in \cref{prop:moment-output}. This implies that
\begin{align*}
\mb{E}\bigg[\exp\bigg(\frac{-p^{t-\tau_{\mr{pre}}}\cdot X(G)}{2}\bigg)\mbm{1}_{\mc{E}_1^c\cap \mc{E}_2^c}\bigg]&= \int_{0}^{1}\mb{P}\bigg[\exp\bigg(\frac{-p^{t-\tau_{\mr{pre}}}\cdot X(G)}{2}\bigg)\mbm{1}_{\mc{E}_1^c\cap \mc{E}_2^c}\ge u\bigg]du\\
&= \int_{0}^{1}\mb{P}\bigg[\exp\bigg(\frac{-e^{Z^{\ast}}\mb{E}[X(G)]p^{t-\tau_{\mr{pre}}}}{2}\bigg)\ge u\bigg]du\pm n^{-\Omega(1)}\\
&= \mb{E}\bigg[\exp\bigg(\frac{-e^{Z^{\ast}}\mb{E}[X(G)]p^{t-\tau_{\mr{pre}}}}{2}\bigg)\bigg]\pm n^{-\Omega(1)}\\
&= \mb{E}[\exp(-e^{Z^{\ast}}2^{n-1}p^t)]\pm n^{-\Omega(1)}.
\end{align*}
This implies the desired result for $t$ in the specified interval.

For $t\le \alpha_cn - \gamma \log n$, we have that 
\[\mb{P}[|S_t| = \emptyset] \le \mb{P}[|S_{\lfloor\alpha_cn -  \gamma \log n\rfloor}| = \emptyset] \le n^{-\Omega(1)},\]
using what we have established for $t=\lfloor\alpha_cn -  \gamma \log n\rfloor$ and explicitly computing $\mb{E}\exp(e^{-Z^\ast}2^{n-1}p^t)$, which implies the desired result in this range. Finally for $t\ge \alpha_cn + \gamma \log n$, we have that 
\[\mb{P}[|S_t| \neq \emptyset]\le\mb{E}[|S_t|]\le p^{(\gamma \log n)/2}\le n^{-\Omega(1)}\]
from \cref{prop:tails}(1) which immediately implies the desired result in this range. This finishes the proof.
\end{proof}

We also prove the remark following \cref{thm:main}.
\begin{proof}[{Proof of \cref{rmk:Lp}}]
Fix $q\ge 1$. Let $\tau^{\ast}$ be the random variable such that $\mb{P}[\tau^{\ast}\le k + \alpha_cn] = \mb{E}\big[\exp\big(-\frac{e^{Z^{\ast}}p^k}{2}\big)\big]$ for 
all choices of $k+\alpha_cn\in\mb{Z}$. Taking $H = 2qc^{-1}$ and applying \cref{prop:tails}(1,2), we have that 
\begin{align*}
\mb{E}|\tau - \alpha_cn|^q&\le \mb{E}[|\tau-\alpha_cn|^q\mbm{1}_{|\tau - \alpha_cn|\le H\log n}] + \mb{E}[|\tau - \alpha_cn|^q\mbm{1}_{|\tau - \alpha_cn|\ge H\log n}]\\
&\le \mb{E}[|\tau - \alpha_cn|^q\mbm{1}_{|\tau - \alpha_cn|\le H\log n}] + (2\alpha_cn)^q\exp(-2q\log n) + \mb{E}[\tau^q\mbm{1}_{\tau\ge 2\alpha_cn}]\\
&\le \mb{E}[|\tau - \alpha_cn|^q\mbm{1}_{|\tau - \alpha_cn|\le H\log n}] + n^{-q/2}\\
&\le \mb{E}[|\tau^{\ast} - \alpha_cn|^q\mbm{1}_{|\tau - \alpha_cn|\le H\log n}] + n^{-\Omega(1)}\\
&\le \mb{E}|\tau^\ast-\alpha_cn|^q + n^{-\Omega(1)}\\
&\le C^q
\end{align*}
Note that bounds derived on the upper tail for $\tau$ are an immediate consequence of \cref{prop:tails}(1) and the final inequality follows from noting that $\tau^\ast$ has subexponential tails by direct calculation.
\end{proof}

\subsection{Reduction to normalized second moment computation}\label{sub:moment-reduction}
We now formally state the second moment computation that we perform in the remainder of the paper and use these results to prove \cref{prop:moment-output}. We first define the key random variables used for small graph conditioning used in the definition of the moments.
\begin{definition}\label{def:cycle-count}
The \emph{$2k$-cycle count} of a sequence of vectors $M=(M_j)_{j\ge 1}$ is
\[C_k(m,M)=\bigg(\frac{1}{\sqrt{n}}\bigg)^k\bigg(\frac{1}{\sqrt{m}}\bigg)^k\sum_{\substack{i_1,\ldots,i_k\in[n]\text{ distinct}\\j_1,\ldots,j_k\in[m]\text{ distinct}}}\prod_{i=1}^kM_{j_\ell,i_\ell}M_{j_\ell,i_{\ell+1}}-\mbm{1}_{k=1}\sqrt{mn},\]
where we let $i_{k+1}=i_1$. Then the \emph{$K$-weighted cycle count} is
\[Y_K(m,M)=\sum_{k=1}^K\frac{2(2\beta)^kC_k(m,M)-(2\beta)^{2k}}{4k}.\]
\end{definition}
\begin{remark}
Notice that for $k=1$, this is a shifted sum of squares of Gaussians. Such an expression is trivial in the Rademacher setting and hence is not required in the work of Abbe, Li, and Sly \cite{ALS21}.
\end{remark}
We also define certain planted conditional distributions that are key in computing the moments.
\begin{definition}\label{def:planted}
For $t\in[-n,n]$ with $t\equiv n\pmod{2}$ define $v_t=(1,\ldots,1,-1,\ldots,-1)\in\mb{R}^n$ where there are $(n+t)/2$ many $1$s. Let $\Delta_0\sim\mc{N}(0,1)^{\otimes n}$. Let $\Delta_1$ be $x\sim\Delta_0$ conditional on the event $\{|\sang{x,v_n}|\le\kappa\sqrt{n}\}$. Let $\Delta_2(t)$ be $x\sim\Delta_1$ conditional on the further event $\{|\sang{x,v_t}|\le\kappa\sqrt{n}\}$. Then let $G^{(0)}=G$, which has its vectors drawn from $\Delta_0$, and let $G^{(1)}=(G^{(1)}_j)_{j\ge 1}$ have vectors drawn independently from $\Delta_1$ and $G^{(2)}(t)=(G^{(2)}_j(t))_{j\ge 1}$ have vectors drawn independently from $\Delta_2(t)$.
\end{definition}

We have the following weighted moment estimate.
\begin{lemma}\label{lem:second-moment}
There exists $\gamma>0$ such that the following holds. Let $m=\tau_\mr{pre}=\lfloor\alpha_cn-\eta\log n\rfloor$ and suppose $n$ is large. With $L=\eta\log n$, $K=\lceil\eta\log n\rceil$, and $\delta=n^{-\gamma}$ we have 
\begin{align}\label{eq:first-moment}
\mb{E}\bigg[\frac{X(G)}{\exp(Y_K(m,G)\mbm{1}[Y_K(m,G)\ge -L])}\bigg]\ge(1-\delta)\mb{E}X(G)\\
\mb{E}\bigg[\frac{X(G)^2}{\exp(2Y_K(m,G)\mbm{1}[Y_K(m,G)\ge -L])}\bigg]\le(1+\delta)\mb{E}X(G)^2.\label{eq:second-moment}
\end{align}
Additionally, we have
\begin{equation}\label{eq:pair-structure}
\sum_{t\in[-(n-1),-n^{1/2}\log n]\cup [n^{1/2}\log n,n-1]}2^n\binom{n}{(n+t)/2}\mb{P}[v_n,v_t\in S_{\tau_\mr{pre}}(G)]\le\exp(-(\log n)^{3/2}).
\end{equation}
\end{lemma}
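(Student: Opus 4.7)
The plan is to reduce the weighted moment estimates to Gaussian MGF computations under planted measures, and to handle the pair structure bound by direct asymptotic analysis.

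For \cref{eq:first-moment} and \cref{eq:second-moment}, I would begin with the standard planted reformulation via $\Sigma_n$-symmetry and Gaussian orthogonal invariance. Since each column index appears exactly twice in the $2k$-cycle product in \cref{def:cycle-count}, the cycle counts $C_k$, hence $Y_K$ and the weight $e^{-Y_K\mbm{1}[\cdot]}$, are invariant under coordinate permutations and sign flips. This gives
\begin{align*}
\mb{E}\!\left[X(G)\,e^{-Y_K\mbm{1}[Y_K\ge-L]}\right]&=\mb{E}[X(G)]\cdot\mb{E}\!\left[e^{-Y_K(m,G^{(1)})\mbm{1}[Y_K(m,G^{(1)})\ge-L]}\right],\\
\mb{E}\!\left[X(G)^2\,e^{-2Y_K\mbm{1}[\cdot]}\right]&=\sum_t 2^n\binom{n}{(n+t)/2}q\!\left(\tfrac{n+t}{2n}\right)^{m}\mb{E}\!\left[e^{-2Y_K(m,G^{(2)}(t))\mbm{1}[\cdot]}\right],
\end{align*}
reducing both bounds to exponential-moment computations under the planted measures. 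Combined with $\mb{E}X(G)^2=\sum_t 2^n\binom{n}{(n+t)/2}q(\tfrac{n+t}{2n})^m$, the second bound becomes a weighted-average bound over $t$ on the $G^{(2)}(t)$ exponential moments.

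The key input, established in the remainder of the paper, is quantitative cycle count convergence: under $G^{(1)}$, and under $G^{(2)}(t)$ for $t$ in the ``close'' range $|t|\le n^{1/2}\log n$, the joint vector $(C_1,\ldots,C_K)$ lies within polynomial total variation distance of a multivariate Gaussian with explicit mean vector (shifted by the planting) and nearly-diagonal covariance dictated by cycle combinatorics. This rate comes from Stein's method with exchangeable pairs, as sketched in \cref{sub:outline}. Granted this, the exponential moments become Gaussian MGF computations whose values follow from the explicit mean shifts and variances. The precise definition of $Y_K$ with the $-(2\beta)^{2k}/(4k)$ corrections in \cref{def:cycle-count} is calibrated so that under $G^{(1)}$ the MGF equals $1\pm o(1)$ (mean shift and variance contributions balance), while under $G^{(2)}(t)$ the MGF of $-2Y_K$ cancels the extra pair-probability contributions coming from short-cycle correlations in the planted measure, recovering $(1+\delta)\mb{E}X(G)^2$ after the $t$-sum. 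The large-$|t|$ tail is absorbed via the pair-structure estimate \cref{eq:pair-structure}. The indicator truncation $\mbm{1}[Y_K\ge-L]$ contributes $n^{-\Omega(1)}$ since $Y_K$ has $O(1)$ variance under the planted measures and $L=\eta\log n$.

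For \cref{eq:pair-structure}, I would first note $\mb{P}[v_n,v_t\in S^{\tau_\mr{pre}}(G)]=q(\tfrac{n+t}{2n})^{\tau_\mr{pre}}$: the pair $(\sang{G_j,v_n}/\sqrt n,\sang{G_j,v_t}/\sqrt n)$ is jointly Gaussian with correlation $t/n$, matching the definition of $q$ in \cref{def:gaussian-special} at $\gamma=(n+t)/(2n)$. Stirling gives $\binom{n}{(n+t)/2}=\exp(nH((n+t)/(2n))+O(\log n))$, reducing the bound to $\sum_t\exp(nF(\alpha)+O(\log n))$ at $\alpha=(n+t)/(2n)$, where $F(\alpha):=\log 2+H(\alpha)+\alpha_c\log q(\alpha)$. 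Standard analysis (as in \cite{PX21,ALS21,Alt22}) gives $F(1/2)=0$, strict negative curvature at $\alpha=1/2$, and $F<-c<0$ outside a small neighborhood; for $|t|\ge n^{1/2}\log n$ this yields per-term bound $\exp(-\Omega((\log n)^2))$ near $\alpha=1/2$ and exponential decay far from it, so summing gives the stated $\exp(-(\log n)^{3/2})$ bound. The $O(\log n)$ offset $m=\tau_\mr{pre}-\alpha_cn$ is absorbed into the error.

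The main obstacle is the quantitative cycle count convergence under $G^{(2)}(t)$: one needs multivariate Gaussian approximation in total variation with polynomial-in-$n$ rate, jointly in the $K=\Theta(\log n)$ coordinates and uniformly in $t\in[-n^{1/2}\log n,n^{1/2}\log n]$, including the novel $k=1$ term unique to the Gaussian setting (as noted after \cref{def:cycle-count}). Carrying out the Stein's method implementation in this dense regime, so that errors survive both the exponential MGF and the weighted sum over $t$, is the technical heart of the remaining sections.
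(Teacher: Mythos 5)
Your proposal is correct and follows essentially the same route as the paper: planted reformulation via $\Sigma_n$-symmetry to reduce both weighted moments to truncated exponential moments under $G^{(1)}$ and $G^{(2)}(t)$, the cycle-count CLT of \cref{lem:clt} (via smooth cutoff functions to make the indicator amenable to the $\mc{C}^3$ test-function bound) to evaluate those as Gaussian MGFs with the claimed cancellation $\wt Y(0)\approx\sqrt{1-4\beta^2}$ offsetting the $1/\sqrt{1-4\beta^2}$ Gaussian sum over $t$, and the $F(\gamma)=H(\gamma)+\alpha_c\log q(\gamma)$ large-deviation analysis with Stirling for \cref{eq:pair-structure}. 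You also correctly identify the quantitative $G^{(2)}(t)$-convergence via Stein exchangeable pairs as the technical core, which is exactly what \cref{sub:gaussian-convergence} supplies.
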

To prove this moment estimate we will use Bayes' theorem, requiring an understanding of the $L$-weighted cycle count when the vectors of $M$ are drawn from the planted conditional distributions.
\begin{lemma}\label{lem:clt}
Let $m=\tau_\mr{pre}=\lfloor\alpha_cn-\eta\log n\rfloor$, $K=\lceil\eta\log n\rceil$, and $|t|\le\sqrt{n}\log n$. We define
\[V(M):=\bigg(\frac{C_1(m,M)}{\sqrt{2\cdot 1}},\ldots,\frac{C_K(m,M)}{\sqrt{2K}}\bigg),\quad\mu:=\bigg(\frac{(2\beta)^1}{\sqrt{2\cdot 1}},\ldots,\frac{(2\beta)^K}{\sqrt{2K}}\bigg).\]
For $g\in\mc{C}^3(\mb{R}^K)$ and $n$ sufficiently large we have 
\begin{align}
|\mb{E}[g(V(G^{(0)}))]-\mb{E}_{Z\sim\mc{N}(0,I_K))}[g(Z)]|&\le (M_1(g)+M_2(g)+M_3(g))n^{-1/5},\label{eq:conditional-0}\\
|\mb{E}[g(V(G^{(1)})-\mu)]-\mb{E}_{Z\sim\mc{N}(0,I_K))}[g(Z)]|&\le (M_1(g)+M_2(g)+M_3(g))n^{-1/5},\label{eq:conditional-1}\\
|\mb{E}[g(V(G^{(2)}(t))-2\mu)]-\mb{E}_{Z\sim\mc{N}(0,I_K))}[g(Z)]|&\le (M_1(g)+M_2(g)+M_3(g))n^{-1/5},\label{eq:conditional-2}
\end{align}
\end{lemma}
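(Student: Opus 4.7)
My plan follows the roadmap sketched by the authors in \cref{sub:outline}: use orthogonal invariance of the standard Gaussian to rewrite each $C_k(m,G^{(i)})$ as a polynomial in i.i.d.\ standard Gaussian and bounded truncated-Gaussian variables, compute the mean and covariance of the vector $V$ directly, and close with a multivariate quantitative CLT obtained from Stein's method with exchangeable pairs. The dependence of the error on $M_1(g)+M_2(g)+M_3(g)$ will arise from a third-order Taylor expansion in the Stein step.

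Step~1 addresses \eqref{eq:conditional-0}. For $G^{(0)}$ each $C_k$ is a homogeneous polynomial of degree $2k$ in the i.i.d.\ $\mc{N}(0,1)$ entries (with the correction $-\mbm{1}_{k=1}\sqrt{mn}$ ensuring that the $k=1$ term is mean-zero as well). A direct moment computation gives $\mb{E}[C_k^2] = 2k+O(n^{-1})$ and $\mb{E}[C_kC_{k'}] = O(n^{-1})$ for $k\ne k'$, so $V(G^{(0)})$ has mean $0$ and covariance $I_K+O(n^{-1})$. I will construct the exchangeable partner by resampling a uniformly random entry of $G^{(0)}$, verify the variance and third-moment estimates required by the multivariate exchangeable-pair Stein framework, and read off a polynomial error, loosely recorded as $n^{-1/5}$.

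Step~2 addresses \eqref{eq:conditional-1} and \eqref{eq:conditional-2} simultaneously. For $G^{(1)}$, pick an orthogonal $R$ with $Rv_n=\sqrt{n}\,e_1$ and decompose $G^{(1)}_j = R^T(\xi_j,W_j)$, where $W_j\in\mb{R}^{n-1}$ is standard Gaussian and $\xi_j$ is an independent $\mc{N}(0,1)$ truncated to $[-\kappa,\kappa]$. Since distinct $j_\ell$ contribute independently to a cycle, the pair-expectation $\mb{E}[M_{j,i}M_{j,i'}]$ for $i\ne i'$ equals $(\mu_2-1)/n$ by \cref{def:gaussian-special}; summing over the $m(m-1)\cdots(m-k+1)\cdot n(n-1)\cdots(n-k+1)$ choices of distinct indices yields
\[\mb{E} C_k(m,G^{(1)}) = (\sqrt{\alpha_c}(\mu_2-1))^k + O(n^{-\Omega(1)}) = (2\beta)^k+O(n^{-\Omega(1)})\]
via the identity $2\beta = -\sqrt{\alpha_c}(1-\mu_2)$. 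A parallel second-moment calculation shows the covariance of the centered vector remains $I_K+O(n^{-\Omega(1)})$, because the conditioning touches only a single rank-one direction, and applying the Stein argument of Step~1 to the centered polynomial proves \eqref{eq:conditional-1}. For \eqref{eq:conditional-2}, choose $R$ sending the span of $v_n,v_t$ to the $(e_1,e_2)$-plane; when $|t|\le\sqrt{n}\log n$ the two constraints are nearly orthogonal, so the joint law of the first two coordinates of $RG^{(2)}_j(t)$ is an $O(n^{-1/2}\log n)$ perturbation of a product of two independent truncated Gaussians, and the same calculation yields $\mb{E} C_k(m,G^{(2)}(t)) = 2(2\beta)^k+O(n^{-\Omega(1)})$ with the Stein argument closing the bound.

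The main obstacle will be the combinatorial bookkeeping underlying ``direct moment computation.'' Since $K=\lceil\eta\log n\rceil$, the polynomials $C_k$ have degree up to $2K=O(\log n)$ and raw expansions with $n^{O(\log n)}$ terms; to obtain a polynomial rate in $n$ one must exploit $|2\beta|<1$ together with cancellations among cycle arrangements of different topological types and ensure that the accumulated Stein error stays polynomial rather than quasi-polynomial in $K$. In \eqref{eq:conditional-2} there is the additional burden of verifying that the $O(|t|/n)$ cross-interference between the two constraint directions is uniformly absorbed into the $n^{-\Omega(1)}$ error throughout the range $|t|\le\sqrt{n}\log n$.
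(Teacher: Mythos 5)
Your Step~1 for \eqref{eq:conditional-0} matches the paper's argument (it is precisely \cref{prop:stein-computation}: resample one entry to form the exchangeable pair, apply \cref{thm:exchangeable}, and control $\mb{E}\snorm{E'}_{\mr{HS}}$ and $\mb{E}\snorm{V'-V}_2^3$ via cycle-overlap counting and hypercontractivity). Your mean computation in Step~2 is also correct: for $G^{(1)}$ the pair-covariance $(\mu_2-1)/n$ does lead to $\mb{E}\,C_k(m,G^{(1)})=(2\beta)^k+O(n^{-\Omega(1)})$, matching \cref{eq:stability-1}.

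The gap is in the sentence ``applying the Stein argument of Step~1 to the centered polynomial proves \eqref{eq:conditional-1}.'' The exchangeable-pair machinery of Step~1 hinges on the entries of $G^{(0)}$ being i.i.d.\ Gaussian: you resample one coordinate, the resulting polynomial increment is again a low-degree polynomial in i.i.d.\ Gaussians, and you control its moments with \cref{thm:gauss-moment}. For $G^{(1)}$ or $G^{(2)}(t)$ the coordinates of each row are \emph{not} independent (they are jointly Gaussian conditioned on one or two linear constraints), so resampling a single entry is not a valid exchangeable-pair move. If instead you resample in your rotated coordinates $(\xi_j,W_j)$, you repair independence, but then $C_k$ is no longer a cycle polynomial in the new variables — after substituting $M_{j,i}=R_{1,i}\xi_j+\sum_{a\ge 2}R_{a,i}W_{j,a-1}$ the expansion spreads each cycle term over many monomials and the variance bookkeeping becomes substantially harder; moreover \cref{thm:gauss-moment} (hypercontractivity) is stated for i.i.d.\ Gaussian/Rademacher and does not directly apply to the truncated Gaussians $\xi_j$.

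The paper sidesteps all of this with a coupling rather than a direct conditional CLT. It introduces the generalized cycle count $\wt{C}_k(m,t,M,\sigma)$ of \cref{def:general-cycle-count}, which splits each entry into a ``centered'' part $\wt{M}_{j,i}(t)$ (whose law is the same for $G^{(0)},G^{(1)},G^{(2)}(t)$) and the block averages $\sigma_{j,i}(t,M)$ (which carry all the conditioning and are independent of $\wt{M}$). This allows one to realize $C_k(m,G^{(b)})$ and $C_k(m,G^{(0)})$ on the same probability space via $\wt{C}_k(m,t,G,\sigma(t,G^{(b)}))$, and \cref{lem:gaussian-stability} shows that the difference has mean $b\cdot(2\beta)^k$ and tiny variance. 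Then $\mb{E}\snorm{V(G^{(b)})-b\mu-V(G^{(0)})}_2\le n^{-1/5}$, which contributes only the $M_1(g)$ term in the error (via a Lipschitz bound), and the rest follows from \cref{prop:stein-computation} and the triangle inequality. In other words, the $M_1(g)$ term in the lemma is the fingerprint of this coupling step; your proposal, which does not use it, signals that you are trying to run the conditional CLT head-on instead of reducing to the unconditional one. To make your plan rigorous you would need either a version of Stein's method adapted to the rotated coordinates (with a hypercontractivity bound for truncated Gaussians), or you should adopt the paper's coupling reduction.
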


Now we prove \cref{prop:moment-output}.
\begin{proof}[Proof of \cref{prop:moment-output}]
We prove each of the items of \cref{prop:moment-output} in turn. For the first item notice by linearity of expectation that
\begin{align*}
\mb{E}[|\{x_1,x_2\in S_{\tau_{\on{pre}}}(G)\colon& |\sang{x_1,x_2}|\in [-(n-1),-n^{2/3}]\cup [n^{2/3},n-1]\}|] \\
&= \sum_{t\in [-(n-1),-n^{1/2}\log n]\cup [n^{1/2}\log n,n-1]}2^n\binom{n}{(n+t)/2}\mb{P}[v_n,v_t\in S_{\tau_\mr{pre}}(G)]\\
&\le \exp(-(\log n)^{3/2})
\end{align*}
by using \cref{eq:pair-structure}. The first item then follows immediately by Markov's inequality. 

For the second item, let $\wt{X}(G) = \frac{X(G)}{\exp(Y_K(m,G)\mbm{1}[Y_K(m,G)\ge-L])}$. By \cref{eq:first-moment,eq:second-moment} we have that 
\[\mb{E}\bigg[\bigg(\frac{\wt{X}(G)}{\mb{E}[X(G)]} - 1\bigg)^2\bigg]\le 3n^{-\gamma}.\]
Therefore with probability $1-n^{-\Omega(1)}$ we have that $\wt{X}(G) = (1\pm n^{-\Omega(1)})\mb{E}[X(G)]$. Equivalently this implies that with probability $1-n^{-\Omega(1)}$,
\[X(G)/\mb{E}[X(G)] = (1\pm n^{-\Omega(1)})\exp(Y_K(m,G)\mbm{1}[Y_K(m,G)\ge-L]).\]
The result then follows from \cref{lem:clt}: $Y_k(m,G)$ is a function of the coordinates of $V(G)$, which is close in test function distance to $\mc{N}(0,I_K)$. Therefore, the distribution of $X(G)/\mb{E}X(G)$ ought to look like an exponential of an appropriate Gaussian with mean $\sum_{k=1}^K\frac{-(2\beta)^{2k}}{4k}\approx(1/4)\log(1-4\beta^2)$ and variance $\sum_{k=1}^K\bigg(\frac{(2\beta)^k}{\sqrt{2k}}\bigg)^2\approx-(1/2)\log(1-4\beta^2)$ (these terms come from the linear expression of $Y_K(m,G)$ in terms of $V(G)$).

To make this quantitative, we choose appropriate bump functions $g$ in \cref{eq:conditional-0} and use the fact that Gaussian distributions are appropriately anticoncentrated; we omit the routine justification (see e.g.~\cite[Lemma~7.1]{BSS21}; see also \cref{sub:first-moment} for a similar computation). 
\end{proof}

\section{Preliminaries for moment computations}\label{sec:moment-prelim}
\subsection{Estimates on Gaussian pair probabilities}\label{sub:special-estimates}
We will require the following special function estimates on the pair point probabilities derived in the work of Abbe, Li, and Sly \cite{ALS21} and in work of Altschuler \cite{Alt22}. The first bullet is equivalent to the first bullet of \cite[Lemma~3.1]{Alt22}, the next three bullets appear exactly as in \cite[Lemma~3.1]{Alt22} (these were originally established in \cite[Section~4.7]{ALS21}), the fifth appears as \cite[(4.6)]{Alt22}, and the sixth appears as \cite[(4.3)]{Alt22}.
\begin{lemma}[From {\cite[Lemma~3.1]{Alt22},~\cite[(4.6)]{Alt22},~\cite[(4.3)]{Alt22}}]\label{lem:computation}
There exists $\eps=\eps_{\ref{lem:computation}} > 0$ such that the following hold. Let $F(\gamma) := H(\gamma) + \alpha_c\log q(\gamma)$.
\begin{enumerate}[{\bfseries{S\arabic{enumi}}}]
    \item\label{S1} $q''(1/2) = \frac{8\kappa^2e^{-\kappa^2}}{\pi}$
    \item\label{S2} $F''(1/2) \le-\eps$.
    \item\label{S3} $F(\gamma)$ is decreasing for $\gamma\in[0,\eps]$.
    \item\label{S4} For any $0\le a\le b\le 1/2$,\[\max_{\beta\in [a,b]} F(\gamma) = \max\{F(a),F(b),F(1/2)-\eps\}.\]
    \item\label{S5} \[F(1/n)\le F(0) - \eps/\sqrt{n}.\]
    \item\label{S6} \[(1-\mu_2)p = \sqrt{\frac{2}{\pi}}\kappa e^{-\kappa^2/2}\]
\end{enumerate}
\end{lemma}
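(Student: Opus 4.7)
The plan is mostly citational, since the statement is explicitly framed as a compilation of prior-work estimates. Items \ref{S2}, \ref{S3}, \ref{S4} are verbatim restatements of the corresponding bullets of \cite[Lemma~3.1]{Alt22} (proved originally in \cite[Section~4.7]{ALS21}), and \ref{S5} appears as \cite[(4.6)]{Alt22}; for these I would simply quote the references after checking that the normalizations of $\kappa$, $p$, $\alpha_c$, $q$, and $F$ match those used here (which they do verbatim).

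For \ref{S6} I would include a short self-contained derivation in addition to citing \cite[(4.3)]{Alt22}: writing $\mu_2 p = \mb{E}[Z^2\mbm{1}_{|Z|\le\kappa}]$ and integrating by parts with $z\phi(z)\,dz = -d\phi(z)$ gives $\mu_2 p = p - 2\kappa\phi(\kappa)$, hence
\[(1-\mu_2)p = 2\kappa\phi(\kappa) = \sqrt{2/\pi}\,\kappa e^{-\kappa^2/2}.\]

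For \ref{S1}, the stated reference provides only an equivalent form (phrased via $F''(1/2)$ rather than $q''(1/2)$), so I would bridge the two. Differentiating $F = H + \alpha_c \log q$ twice gives
\[F''(\gamma) = H''(\gamma) + \alpha_c\bigg(\frac{q''(\gamma)}{q(\gamma)} - \frac{q'(\gamma)^2}{q(\gamma)^2}\bigg),\]
and the symmetry $q(\gamma) = q(1-\gamma)$ forces $q'(1/2) = 0$. Combining this with $q(1/2) = p^2$ (at $\gamma=1/2$ the two arguments $\sqrt{\gamma}Z_1 \pm \sqrt{1-\gamma}Z_2$ are independent standard Gaussians) and $H''(1/2) = -4$ lets me solve algebraically for $q''(1/2)$ in terms of $F''(1/2)$, whose explicit value appears in \cite[Lemma~3.1]{Alt22}. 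As an independent sanity check one can recompute $q''(1/2)$ directly: the pair $(\sqrt{\gamma}Z_1+\sqrt{1-\gamma}Z_2,\sqrt{\gamma}Z_1-\sqrt{1-\gamma}Z_2)$ is bivariate standard normal with correlation $\rho=2\gamma-1$, so $q$ is a rectangle probability to which Plackett's identity $\partial_\rho \mb{P}[X\le a,Y\le b]=\phi_\rho(a,b)$ applies twice; evaluating at $\rho=0$ where $\phi_0(\kappa,\kappa) = (2\pi)^{-1}e^{-\kappa^2}$ and using $(d\rho/d\gamma)^2 = 4$ recovers the stated constant $8\kappa^2 e^{-\kappa^2}/\pi$.

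The only real obstacle is aligning notation across \cite{Alt22} and \cite{ALS21}; since the paper reproduces the relevant definitions verbatim and all special functions are standard, this alignment is immediate and there is no substantive new calculation to perform.
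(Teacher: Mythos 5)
Your approach matches the paper's: \cref{lem:computation} is stated in the paper as a pure citation of \cite{Alt22} (with the note that the form of \ref{S1} used here is equivalent to the one stated there), and no proof is given. Your self-contained checks of \ref{S1} via Plackett's identity (or the $F''$ bridge) and of \ref{S6} via integration by parts are both correct and are a reasonable way to verify the ``equivalent form'' translation for \ref{S1} that the paper asserts without spelling out.
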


\subsection{Quantification of Gaussian convergence}\label{sub:gaussian-convergence}
In order to prove the required Gaussian convergence of the random variables $Y_K$, we will prove \cref{lem:clt} in this section, essentially following the approach of Abbe, Li, and Sly \cite[Lemma~3.2]{ALS21}. We note their argument is written for Rademacher disorder, and additionally in order to prove \cref{rmk:Lp}, we will need to adapt various arguments in order to prove a stronger convergence result. To do so, we proceed indirectly via a linear algebraic change using the Gaussian structure. Specifically, we will first show that in a sense, sampling the vectors of $M$ from the conditional distributions $\Delta_1$ or $\Delta_2(t)$ for $|t|$ small (i.e., setting $M=G^{(1)}$ or $M=G^{(2)}(t)$) is very similar to sampling from $\Delta_0=\mc{N}(0,I_n)$ (i.e., $M=G^{(0)}=G$) up to a mean shift. Then we will show the necessary multivariate central limit theorem for $\Delta_0$ using multidimensional Stein's method machinery adapted from Meckes \cite{Mec09}. As our argument essentially serves as a more effective version of the bounds given in \cite[Lemma~3.2]{ALS21} in the Gaussian case, we will be brief with certain basic calculations.

To reduce to $\Delta_0$, we introduce a slightly more general cycle count which allows us to plug in different conditional distributions.
\begin{definition}\label{def:general-cycle-count}
Given $M$ and $-n<t<n$ with $t\equiv n\pmod{2}$, we define
\begin{align*}
\sigma_{j,i}(t,M)&=\frac{1}{(n+t)/2}\sum_{i'=1}^{(n+t)/2}M_{j,i'},&\wt{M}_{j,i}(t)=M_{j,i}-\sigma_{j,i}(t,M),\qquad&\text{for }1\le i\le\frac{n+t}{2},\\
\sigma_{j,i}(t,M)&=\frac{1}{(n-t)/2}\sum_{i'=(n+t)/2+1}^nM_{j,i'},&\wt{M}_{j,i}(t)=M_{j,i}-\sigma_{j,i}(t,M),\qquad&\text{for }\frac{n+t}{2}<i\le n.
\end{align*}
If we are further given a sequence of $\mb{R}^n$ vectors $\sigma=(\sigma_j)_{j\ge 1}$, we define
\[\wt{C}_k(m,t,M,\sigma)=\frac{1}{(mn)^{k/2}}\sum_{\substack{i_1,\ldots,i_k\in[n]\text{ distinct}\\j_1,\ldots,j_k\in[m]\text{ distinct}}}\prod_{\ell=1}^k\bigg(\wt{M}_{j_\ell,i_\ell}(t) + \sigma_{j_\ell,i_\ell}\bigg)\bigg(\wt{M}_{j_\ell,i_{\ell+1}}(t) + \sigma_{j_\ell,i_{\ell+1}}\bigg)-\mbm{1}_{k=1}\sqrt{mn}.\]
\end{definition}

The key point of this definition is that
\[C_k(m,M)=\wt{C}_k(m,t,M,\sigma(t,M))\]
and the distributions of $\wt{M}(t)$ and $\sigma(t,M)$ are independent in all 
relevant cases (namely $M=G^{(0)},G^{(1)},G^{(2)}(t)$). Furthermore, across all relevant cases we have that the distribution of $\wt{M}(t)$ is identical, so the distributions of $C_k(m,M)$ and $\wt{C}_k(m,t,G,\sigma(t,M))$ are identical. Thus it suffices to understand the differences in the models via the parameters $\sigma$, which merely encode the average of the first $(n+t)/2$ and last $(n-t)/2$ elements of each vector. Therefore, we prove the following, which essentially shows the resulting statistics $V(M)$ are the same up to shifting of means.

\begin{lemma}\label{lem:gaussian-stability}
Let $m=\tau_\mr{pre}=\lfloor\alpha_cn-\eta \log n\rfloor$ and $K=\lceil\eta\log n\rceil$. Given $1\le k\le K$ and $|t|\le\sqrt{n}\log n$, we have
\begin{align}\label{eq:stability-1}\mb{E}\big[\wt{C}_k(m,t,G,\sigma(t,G^{(1)}))-C_k(m,G)\big]&=(1+\wt{O}(n^{-1}))(2\beta)^k,\\
\mb{E}\big[\wt{C}_k(m,t,G,\sigma(t,G^{(2)}(t)))-C_k(m,G)\big]&=(1+\wt{O}(n^{-1/4}))2\cdot(2\beta)^k,\label{eq:stability-2}\\
\on{Var}\big[\wt{C}_k(m,t,G,\sigma(t,G^{(1)}))-C_k(m,G)\big]&=O(n^{-5/6}\on{Var}[C_k(m,G)]),\label{eq:stability-var-1}\\
\on{Var}\big[\wt{C}_k(m,t,G,\sigma(t,G^{(2)}(t)))-C_k(m,G)\big]&=O(n^{-5/6}\on{Var}[C_k(m,G)]).\label{eq:stability-var-2}
\end{align}
\end{lemma}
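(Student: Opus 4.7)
The plan rests on the orthogonal decomposition $M_{j,i} = \wt{M}_{j,i}(t) + \sigma_{j,i}(t, M)$ together with a Gaussian observation: for $M_j \sim \mc{N}(0, I_n)$, the block-centered residual $\wt{M}_j(t)$ and the pair of block averages (encoded in $\sigma_j(t, M)$) are independent. Moreover, the conditioning events defining $\Delta_1$ and $\Delta_2(t)$ only restrict the linear functionals $\langle M_j, v_n \rangle$ and $\langle M_j, v_t \rangle$, both of which lie entirely in the two-dimensional block-mean subspace. Consequently the marginal distribution of $\wt{M}(t)$ is identical under all three models $G^{(0)}, G^{(1)}, G^{(2)}(t)$, and it suffices to analyze how the distribution of $\sigma$ alone changes---namely, by truncating one or two directions of a two-dimensional Gaussian. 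I would couple the three $\sigma$-distributions on a common probability space with shared $\wt{M}$, reducing the lemma to controlling
\[\wt{C}_k(m, t, G, \sigma') - \wt{C}_k(m, t, G, \sigma(t, G))\]
for $\sigma' \in \{\sigma(t, G^{(1)}), \sigma(t, G^{(2)}(t))\}$.

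Next I would expand this difference as a polynomial in the $\sigma$-variables with coefficients depending polynomially on $\wt{M}$. By symmetry of the conditioning events, all odd moments of $\sigma'$ vanish, and after integrating out the independent $\wt{M}$, the expectation decomposes into a combinatorial sum over pairings of the $2k$ edge-endpoints of each $2k$-cycle into $\sigma\sigma$ or $\wt{M}\wt{M}$ contractions. The leading configuration contracts every edge as $\sigma\sigma$. A direct computation (illustrated by $k = 1$: $\mb{E}[C_1(m, G^{(1)})] = \sqrt{m/n}(\mu_2 - 1) = 2\beta \cdot (1 + O(\log n / n))$, with twice this value under $G^{(2)}(t)$ for $t = 0$) shows that each such edge contributes a factor $\sqrt{m/n}(\mu_2 - 1) \approx 2\beta$ under $G^{(1)}$, yielding a total of $(2\beta)^k$; under $G^{(2)}(t)$, conditioning in two nearly-orthogonal directions approximately doubles the per-edge contribution, producing $2 \cdot (2\beta)^k$. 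The $\wt{O}(n^{-1})$ error for $G^{(1)}$ comes from $\sqrt{m/n} - \sqrt{\alpha_c} = \wt{O}(n^{-1})$ plus subleading pairings, while the additional $\wt{O}(n^{-1/4})$ error for $G^{(2)}(t)$ captures the failure of $v_n$ and $v_t$ to be exactly orthogonal (their normalized inner product is $t/n$ with $|t| \le \sqrt{n}\log n$, giving square-root-size corrections in the joint truncated Gaussian).

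For the variance bounds \cref{eq:stability-var-1,eq:stability-var-2}, I would expand $(\wt{C}_k(m, t, G, \sigma') - C_k(m, G))^2$ and take expectation, again using the independence of $\wt{M}$ and $\sigma$. Since the perturbation affects only the two-dimensional block-mean degrees of freedom (with typical magnitude $O(n^{-1/2})$) while $\on{Var}[C_k(m, G)]$ is of constant order, replacing each $\sigma\sigma$-contraction with the difference $(\sigma' - \sigma)(\sigma' - \sigma)$ picks up an additional $n^{-\Omega(1)}$ factor, collectively yielding the $n^{-5/6}$ gain. The main technical obstacle is the combinatorial book-keeping: one must identify the leading $(2\beta)^k$ pairing, bound all subleading pairings uniformly in $k \le K = \lceil \eta \log n \rceil$, and treat the degenerate $k = 1$ case (where the cycle collapses to a sum-of-squares statistic) separately, as flagged in the remark following \cref{def:general-cycle-count}.
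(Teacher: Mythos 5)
Your high-level framework matches the paper's proof: the orthogonal decomposition of $M$ into a block-centered residual $\wt{M}(t)$ and the block averages $\sigma$, the observation that $\wt{M}(t)$ has the same distribution under all three models (because the conditioning events are measurable with respect to the block-mean subspace alone and independent of $\wt{M}(t)$), and the resulting reduction to moment computations for the small $\sigma$-perturbation. The expectation computation is essentially the one the paper carries out. However, there are several points worth flagging.

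First, the mechanism you give for the factor of $2$ in \cref{eq:stability-2} is off. It does not come from "conditioning in two nearly-orthogonal directions approximately doubling the per-edge contribution." In the paper's computation, for the $\sigma\sigma$ contraction to be nonzero under $G^{(2)}(t)$ the cycle must keep all column indices in a single block (since $\mb{E}[\rho_1^{(2)}\rho_2^{(2)}] = 0$ by symmetry); each block contributes $\alpha_c^{k/2}(\mu_2-1)^k = (2\beta)^k$, and the $2$ is from summing over the two blocks. For $G^{(1)}$, the cross-block covariance $\mb{E}[\rho_1^{(1)}\rho_2^{(1)}]$ is \emph{not} zero, so this clean decoupling fails; the paper handles this by suggesting one condition on the total sum only (equivalently, work with a one-block version of \cref{def:general-cycle-count}). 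Your appeal to "symmetry of the conditioning events" making "all odd moments of $\sigma'$ vanish" is true but does not address the even cross-block correlation that complicates $G^{(1)}$ — you have the relative difficulty of the two cases backwards.

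Second, and more substantively, the variance bound is the technical heart of this lemma and your argument for it is too coarse. The claim that "replacing each $\sigma\sigma$-contraction with $(\sigma'-\sigma)(\sigma'-\sigma)$ picks up an additional $n^{-\Omega(1)}$ factor" does not survive scrutiny for two reasons. (i) The difference $\wt{C}_k(m,t,G,\sigma') - \wt{C}_k(m,t,G,\sigma)$ does not factor into differences $(\sigma'-\sigma)$; it is a telescoping sum over placements of $\sigma'$ versus $\sigma$, and the paper controls it via a Cauchy--Schwarz split over the $2^{2k}$ assignments $b\in\{0,1\}^{2k}$ of which endpoints are $\wt{G}$ versus $\sigma$. (ii) The intuition "$\sigma$ has typical magnitude $O(n^{-1/2})$, hence each $\sigma$ gives a gain" ignores that $\sigma_{j,i}$ is \emph{constant} across all $i$ in the same block — every occurrence of $\sigma$ for a fixed $j$ is the same random variable $\rho_1$ or $\rho_2$, not an independent small quantity. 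This perfect within-block correlation means the combinatorial sum over index tuples can conspire to compensate for the smallness of individual $\sigma$'s, and one must count overlapping $2k$-cycle configurations (the paper parametrizes by the number $r$ of shared $\wt{G}^2$ edges and the number $s$ of connected overlap components, bounding contributions by $(O(k))^{O(s)}(O(n))^{4k-r-s}$ and handling $s=0,r=0$ by noting the $j$-sequences must share a value). The exponent $n^{-5/6}$ comes out of this counting with $K = \lceil\eta\log n\rceil$ and $\eta$ small; it is not a generic "$n^{-\Omega(1)}$" that falls out of a per-edge heuristic. To make your proof go through you need to carry out this overlap-multigraph enumeration, or something equivalent.
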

\begin{proof}
We define
\[\rho_1^{(0)}=\sigma_{1,1}(t,G^{(0)}),\rho_2^{(0)}=\sigma_{1,n}(t,G^{(0)})\]
and similar for $G^{(1)},G^{(2)}(t)$. Note that $\sigma_{1,i}$ for all $i\in\{1,\ldots,(n+t)/2\}$ will give exactly $\rho_1$ and $\sigma_{1,i}$ for all $i\in\{(n+t)/2+1,\ldots,n\}$ will give exactly $\rho_2$. Furthermore, as we vary $j$ in $\sigma_{j,i}$, we do not have the exact same value but we have an independent value that is drawn from the same distribution. For instance, note that $\rho_1^{(0)}=(G_{1,1}+\cdots+G_{1,(n+t)/2})/((n+t)/2)$ and $\rho_2^{(0)}$ some Gaussians, while $(\rho_1^{(1)},\rho_2^{(1)})$ have the distribution of $(x,y)=(\rho_1^{(0)},\rho_2^{(0)})$ conditioned on $|(n+t)x/2+(n-t)y/2|\le\kappa\sqrt{n}$, and $(\rho_1^{(2)},\rho_2^{(2)})$ have the distribution of $(x,y)=(\rho_1^{(1)},\rho_2^{(1)})$ further conditioned on $|(n+t)x/2-(n-t)y/2|\le\kappa\sqrt{n}$.

We first note that
\begin{equation}\label{eq:rho-expectation}
\mb{E}\rho_1^{(b)}=\mb{E}\rho_2^{(b)}=0
\end{equation}
for all $b\in\{0,1,2\}$, and
\begin{equation}\label{eq:rho-covariance}
\mb{E}\rho_1^{(b)}\rho_2^{(b)}=0
\end{equation}
for $b\in\{0,2\}$. This can be seen from symmetry. Note that $b=1$ does have some covariance. Additionally, we have
\[\mb{E}(\rho_1^{(0)})^2=\frac{1}{(n+t)/2},\quad\mb{E}(\rho_2^{(0)})^2=\frac{1}{(n-t)/2}.\]
Finally, we can compute
\[\mb{E}(\rho_1^{(2)})^2=(1+\wt{O}(n^{-1/4}))\frac{2\mu_2}{n+t},\quad\mb{E}(\rho_2^{(2)})^2=(1+\wt{O}(n^{-1/4}))\frac{2\mu_2}{n-t},\]
using that the distribution of $(\rho_1^{(2)},\rho_2^{(2)})$ is that of $(\rho_1^{(0)},\rho_2^{(0)})$, which are independent Gaussians, conditional on two explicit linear inequalities. (The error term comes from the possible deviation in $t$; if $t=0$ these are easily seen to be precise equalities.)

We first consider \cref{eq:stability-2,eq:stability-var-2}. For the expectation, note that $G,G^{(2)}(t)$ are independent and $G,G^{(2)}(t)$ are mean $0$ (the latter since we are in the symmetric perceptron), so for $k>1$
\begin{align*}
\mb{E}&[\wt{C}_k(m,t,G,\sigma(t,G^{(2)}(t)))-C_k(m,G)]=\mb{E}[\wt{C}_k(m,t,G,\sigma(t,G^{(2)}(t)))]\\
&=\frac{1}{(mn)^{k/2}}\sum_{\substack{i_1,\ldots,i_k\in[n]\text{ distinct}\\j_1,\ldots,j_k\in[m]\text{ distinct}}}\mb{E}\bigg[\prod_{\ell=1}^k\big(\wt{G}_{j_\ell,i_\ell}+\sigma_{j_\ell,i_\ell}(t,G^{(2)}(t))\big)\big(\wt{G}_{j_\ell,i_{\ell+1}}+\sigma_{j_\ell,i_{\ell+1}}(t,G^{(2)}(t))\big)\bigg]\\
&=\frac{1}{(mn)^{k/2}}\sum_{\substack{i_1,\ldots,i_k\in[n]\text{ distinct}\\j_1,\ldots,j_k\in[m]\text{ distinct}}}\bigg(\prod_{\ell=1}^k\mb{E}[\wt{G}_{j_\ell,i_\ell}\wt{G}_{j_\ell,i_{\ell+1}}+\sigma_{j_\ell,i_\ell}(t,G^{(2)}(t))\sigma_{j_\ell,i_{\ell+1}}(t,G^{(2)}(t))]\bigg)\\
&=\frac{(m)\cdots(m-k+1)}{(mn)^{k/2}}\sum_{i_1,\ldots,i_k\in[n]\text{ distinct}}\bigg(\prod_{\ell=1}^k\mb{E}[\wt{G}_{j_\ell,i_\ell}\wt{G}_{j_\ell,i_{\ell+1}}+\sigma_{j_\ell,i_\ell}(t,G^{(2)}(t))\sigma_{j_\ell,i_{\ell+1}}(t,G^{(2)}(t))]\bigg).
\end{align*}
Now if any $i_\ell,i_{\ell+1}$ are not in the same group $\{1,\ldots,(n+t)/2\},\{(n+t)/2+1,\ldots,n\}$ then the corresponding terms are $0$ by \cref{eq:rho-covariance}. Thus we must either choose all $i_\ell$ to be in one group or the other to have nonzero contribution. Additionally, the choice of $j_1,\ldots,j_k$ does not impact the inner term, which implies
\begin{align*}
\mb{E}&[\wt{C}_k(m,t,G,\sigma(t,G^{(2)}(t)))-C_k(m,G)]=\mb{E}[\wt{C}_k(m,t,G,\sigma(t,G^{(2)}(t)))]\\
&=\frac{(m)\cdots(m-k+1)}{(mn)^{k/2}}\bigg(\bigg(\frac{n+t}{2}\bigg)\cdots\bigg(\frac{n+t}{2}-k+1\bigg)\bigg(-\frac{2}{n+t}+\mb{E}(\rho_1^{(2)})^2\bigg)^k\\
&\qquad\qquad\qquad\qquad\qquad\qquad+\bigg(\frac{n-t}{2}\bigg)\cdots\bigg(\frac{n-t}{2}-k+1\bigg)\bigg(-\frac{2}{n-t}+\mb{E}(\rho_2^{(2)})^2\bigg)^k\bigg)\\
&=(1+\wt{O}(n^{-1/4}))\alpha_c^{k/2}(2(\mu_2-1)^k)=(1+\wt{O}(n^{-1/4}))2(2\beta)^k
\end{align*}
We used that $\mb{E}\wt{G}_{j,1}\wt{G}_{j,2}=-2/(n+t)$ and similar for the other index group. This establishes \cref{eq:stability-2} for $k>1$. For $k=1$, we again see $\mb{E}C_1(m,G)=0$ so we obtain
\begin{align*}
\frac{m}{\sqrt{mn}}&\bigg(\sum_{i=1}^n\mb{E}\wt{G}_{1,i}^2+\mb{E}\sigma_{1,1}(t,G^{(2)}(t))^2+\mb{E}\sigma_{1,n}(t,G^{(2)}(t))^2\bigg)-\sqrt{mn}\\
&=\frac{m}{\sqrt{mn}}\bigg(\frac{n+t}{2}-1+\frac{n-t}{2}-1+(1+\wt{O}(n^{-1/4}))(\mu_2+\mu_2)\bigg)-\sqrt{mn}=(1+\wt{O}(n^{-1/4}))2(2\beta).
\end{align*}

For \cref{eq:stability-var-2}, we easily compute $\on{Var}[C_k(m,G)]=(1+\wt{O}(n^{-1}))2k$ (using that cycle terms have covariance $0$ unless they represent the exact same cycle, and using that there are $2k$ ways to represent each particular cycle). Thus it suffices to bound the desired variance by a smaller than constant order. We expand and compute the order of magnitude of each covariance term that appears. Writing $X_{j,i}^{(0)}=Y_{j,i}^{(0)}=\wt{G}_{j,i}$ and $X_{j,i}^{(1)}=\sigma_{j,i}(t,G^{(2)}(t))$ and $Y_{j,i}^{(1)}=\sigma_{j,i}(t,G)$, we have
\begin{align*}
\on{Var}[\wt{C}_k(m,t,&G,\sigma(t,G^{(2)}(t)))-C_k(m,G)]\\
&\le\frac{2^{2k}}{(mn)^k}\sum_{b\in\{0,1\}^{2k}}\on{Var}\bigg[\sum_{\substack{i_1,\ldots,i_k\in[n]\\j_1,\ldots,j_k\in[m]}}\bigg(\prod_{\ell=1}^kX_{j_\ell,i_\ell}^{(b_{2\ell-1})}X_{j_\ell,i_{\ell+1}}^{(b_{2\ell})}-\prod_{\ell=1}^kY_{j_\ell,i_\ell}^{(b_{2\ell-1})}Y_{j_\ell,i_{\ell+1}}^{(b_{2\ell})}\bigg)\bigg]
\end{align*}
by the Cauchy--Schwarz inequality. Now we compute the order of magnitude of a single contribution corresponding to a fixed $b\in\{0,1\}^{2k}$. We obtain
\begin{equation}\label{eq:b-variance-term}
\sum_{i,j,i',j'}\mb{E}\bigg[\bigg(\prod_{\ell=1}^kX_{j_\ell,i_\ell}^{(b_{2\ell-1})}X_{j_\ell,i_{\ell+1}}^{(b_{2\ell})}-\prod_{\ell=1}^kY_{j_\ell,i_\ell}^{(b_{2\ell-1})}Y_{j_\ell,i_{\ell+1}}^{(b_{2\ell})}-\mb{E}[\cdot]\bigg)\bigg(\prod_{\ell=1}^kX_{j_\ell',i_\ell'}^{(b_{2\ell-1})}X_{j_\ell',i_{\ell+1}'}^{(b_{2\ell})}-\prod_{\ell=1}^kY_{j_\ell',i_\ell'}^{(b_{2\ell-1})}Y_{j_\ell',i_{\ell+1}'}^{(b_{2\ell})}-\mb{E}[\cdot]\bigg)\bigg]
\end{equation}
where each $\cdot$ is replaced with the preceding difference of products and where $i,j,i',j'$ are summing over sequences of distinct values over the appropriate ranges.

To estimate this, we first consider the order of magnitude of an individual term. Any such term is composed of values $\wt{G}_{j,i},\sigma_{j,i}$. Furthermore, to avoid the expectation being $0$, we must have an even number of $\sigma_{j,i}$ terms for each possible index $j$, and an even number of $\wt{G}_{j,i}$ for each $j$. Note that $\mb{E}\sigma_{j,i}^{2a}=O_a(n^{-a})$ for either version of $\sigma$ (coming from $G^{(0)},G^{(2)}(t)$). Additionally,
\[\mb{E}\wt{G}_{1,1}\wt{G}_{1,2}=O(1/n),\quad\mb{E}\wt{G}_{1,1}\wt{G}_{1,2}\wt{G}_{1,3}\wt{G}_{1,4}=O(1/n^2),\quad\mb{E}\wt{G}_{1,1}^2\wt{G}_{1,3}\wt{G}_{1,4}=O(1/n)\]
follows from explicit computation.

Therefore, we easily find that (using independence of the random variables associated to different $j$ and using that every $j$ can only appear once in $(j_\ell)_{\ell\in[k]},(j_\ell')_{\ell\in[k]}$):
\begin{align*}
\mb{E}&\bigg[\bigg(\prod_{\ell=1}^kX_{j_\ell,i_\ell}^{(b_{2\ell-1})}X_{j_\ell,i_{\ell+1}}^{(b_{2\ell})}-\prod_{\ell=1}^kY_{j_\ell,i_\ell}^{(b_{2\ell-1})}Y_{j_\ell,i_{\ell+1}}^{(b_{2\ell})}-\mb{E}[\cdot]\bigg)\bigg(\prod_{\ell=1}^kX_{j_\ell',i_\ell'}^{(b_{2\ell-1})}X_{j_\ell',i_{\ell+1}'}^{(b_{2\ell})}-\prod_{\ell=1}^kY_{j_\ell',i_\ell'}^{(b_{2\ell-1})}Y_{j_\ell',i_{\ell+1}'}^{(b_{2\ell})}-\mb{E}[\cdot]\bigg)\bigg]\\
&=(O(1/n))^{2k-t},
\end{align*}
where $r$ is the number of $\wt{G}^2$ terms that occur in the expanded product. We used that there are $4k$ total terms $X,Y$ in any resulting product (the inner expectation terms can be handled similarly). This means that the $2k$-cycle formed by $(i_\ell)_{\ell\in[k]},(j_\ell)_{\ell\in[k]}$ and the $2k$-cycle formed by $(i_\ell')_{\ell\in[k]},(j_\ell')_{\ell\in[k]}$ must overlap in at least $r$ edges that form a $\wt{G}^2$. In the case $r=2k$, note that we must have $b_\ell=0$ for all $\ell\in[2k]$ and then the fact that we subtract $X,Y$ means that the term is in fact precisely $0$. Thus we may assume $r<2k$. Therefore we may suppose these $r\ge 0$ edges form $s\ge 0$ connected components which are all paths. The number of choices for $i,j,i',j'$ is then easily seen to be $(O(k))^{O(s)}\cdot(O(n))^{4k-r-s}$. When $s=0$, we have $r=0$, and we can save an extra factor of $n$ compared to the bound $(O(n))^{4k}$ for the counting given above: if $(j_\ell)_{\ell\in[k]}$ and $(j_\ell')_{\ell\in[k]}$ share no values, then the two parts of the covariance are independence and we obtain a $0$ contribution, so we can ignore the bulk of those terms. The number of ways to choose $i,j,i',j'$ to have such an overlap is at most $k^2m^{2k-1}n^{2k}=(O(n))^{4k-1}$.

Therefore, we deduce
\begin{align*}
\on{Var}[\wt{C}_k&(m,t,G,\sigma(t,G^{(2)}(t)))-C_k(m,G)]\\
&\le\sum_{r,s}\frac{2^{2k}}{(mn)^k}\cdot 2^{2k}\cdot(O(1/n))^{2k-r}\cdot\min\{(O(k))^{O(s)}(O(n))^{4k-r-s},(O(n))^{4k-1}\}\\
&\le(O(1))^k\cdot\min\{(k^{O(1)}/n)^s,1/n\}\le n^{-5/6}\on{Var}[C_k(m,G)].
\end{align*}
We used $k=\eta\log n$ where recall $\eta>0$ will be chosen sufficiently small. This completes the justification of \cref{eq:stability-var-2} for $k>1$; $k=1$ is easily checked by hand.

For \cref{eq:stability-1,eq:stability-var-1} the argument is similar, but the expectation computation is complicated by the fact that $\mb{E}\rho_1^{(1)}\rho_2^{(1)}\neq 0$. In fact, instead of conditioning on the values $M_1+\cdots+M_{(n+t)/2},M_{(n+t)/2+1}+\cdots+M_n$ that appear, it is easier to only condition on the total sum $M_1+\cdots+M_n$ (since we are only conditioning on information related to it for this computation), and to modify \cref{def:general-cycle-count} appropriately. Additionally, since the value $t$ plays no role if we compute the expectation this way, we actually have the better error term listed in \cref{eq:stability-1}. However, we do omit these details here (note that \cite[Lemma~3.2]{ALS21} implies that our claimed mean shift is in fact what comes out of doing the necessary computations).
\end{proof}

Finally, in order to prove the Gaussian convergence of $C_k$ when the matrix is sampled from $\Delta_0$, we rely on an argument based on exchangeable pairs and Stein's method. Let us first define the notion of a pair of exchangeable random variables.
\begin{definition}\label{def:exchangeable}
$X'$ and $X$ are exchangeable random variables if $(X',X)$ and $(X,X')$ have the same distribution.
\end{definition}
The key probability theoretic statement we will use is a multivariate version of the method of exchangeable random variables for proving convergence to a Gaussian. This form is due to Meckes \cite{Mec09}.
\begin{theorem}[Specialization of {\cite[Theorem~3]{Mec09}}]\label{thm:exchangeable}
Let $(X,X')$ be an exchangeable pair of random vectors in $\mb{R}^d$.  Suppose that there is an invertible matrix $\Lambda$ and a random matrix $E'$ such that
\begin{itemize}
\item $\mb{E}\left[X'-X\big|X\right]=-\Lambda X$
\item $\mb{E}\left[(X'-X)(X'-X)^T\big|X\right]=2\Lambda+\mb{E}\left[E'\big|X
\right].$
\end{itemize}
Then for $g\in\mc{C}^3(\mb{R}^d)$,
\begin{equation}\label{eq:error-bound}
\big|\mb{E} g(X)-\mb{E} g(Z)\big|\le\|\Lambda^{-1}\|_{\mr{op}}\left[
 \frac{\sqrt{d}}{4}M_2(g)
\mb{E}\|E'\|_{\mr{HS}}
+\frac{1}{9}M_3(g)\mb{E}\snorm{X'-X}_2^3\right]
\end{equation}
where $Z$ is a standard Gaussian random vector in $\mb{R}^d$.
\end{theorem}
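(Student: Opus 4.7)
The plan is to invoke the standard multivariate Stein's method machinery built on the Ornstein--Uhlenbeck semigroup, combined with an antisymmetrization trick that is made available by the exchangeability of $(X,X')$. Set $\tilde g := g - \mb{E} g(Z)$; the goal is to bound $|\mb{E}\tilde g(X)|$.

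\textbf{Step 1 (Stein solution).} Define
\[f(x) := -\int_0^\infty \mb{E}\big[\tilde g\big(e^{-t} x + \sqrt{1-e^{-2t}}\, Z\big)\big]\, dt,\]
the standard semigroup solution to the multivariate Gaussian Stein equation $\Delta f(x) - \langle x, \nabla f(x)\rangle = \tilde g(x)$. Differentiating under the integral sign, each spatial derivative of the integrand carries a factor $e^{-t}$, and integration in $t$ produces the standard bounds $M_k(f) \le M_k(g)/k$ for $k=2,3$. This reduces the task to bounding $\mb{E}[\Delta f(X) - \langle X, \nabla f(X)\rangle]$.

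\textbf{Step 2 (Linear moment, antisymmetrize).} Since $\Lambda$ is invertible, the first conditional moment hypothesis gives $X = -\Lambda^{-1}\mb{E}[X'-X\mid X]$, so $\mb{E}\langle X,\nabla f(X)\rangle = -\mb{E}\langle \Lambda^{-1}(X'-X),\nabla f(X)\rangle$. By exchangeability of $(X,X')$, swapping roles in the right-hand expectation and averaging yields the antisymmetrized identity
\[\mb{E}\langle \Lambda^{-1}(X'-X),\nabla f(X)\rangle = \tfrac{1}{2}\mb{E}\langle \Lambda^{-1}(X'-X),\nabla f(X) - \nabla f(X')\rangle.\]

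\textbf{Step 3 (Quadratic moment, Taylor).} Taylor-expand $\nabla f(X') - \nabla f(X) = \langle D^2 f(X), X'-X\rangle + R$ with remainder controlled by $\snorm{R} \le \tfrac{1}{2}M_3(f)\snorm{X'-X}^2$. Substituting, the leading contribution to $\mb{E}\langle X,\nabla f(X)\rangle$ is
\[\tfrac{1}{2}\mb{E}\big[\on{tr}\big(\Lambda^{-1}\, D^2 f(X)\, (X'-X)(X'-X)^T\big)\big],\]
which by the second conditional moment hypothesis and the cyclic property of trace (the $2\Lambda$ contribution reduces to $\on{tr}(D^2 f(X)) = \Delta f(X)$) simplifies to $\mb{E}[\Delta f(X)] + \tfrac{1}{2}\mb{E}[\on{tr}(\Lambda^{-1}\, D^2 f(X)\, E')]$. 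Combining with the Stein equation:
\[\mb{E}\tilde g(X) = -\tfrac{1}{2}\mb{E}[\on{tr}(\Lambda^{-1}\, D^2 f(X)\, E')] - \tfrac{1}{2}\mb{E}\langle \Lambda^{-1}(X'-X), R\rangle.\]

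\textbf{Step 4 (Bound the error terms).} For the first term, $|\on{tr}(AB)| \le \|A\|_\mr{HS}\|B\|_\mr{HS}$, $\|\Lambda^{-1}H\|_\mr{HS} \le \|\Lambda^{-1}\|_\mr{op}\|H\|_\mr{HS}$, and $\|D^2 f(X)\|_\mr{HS} \le \sqrt{d}\,M_2(f)$ (as $D^2 f$ is a $d\times d$ matrix) yield the bound $\tfrac{\sqrt{d}}{2}\|\Lambda^{-1}\|_\mr{op} M_2(f)\,\mb{E}\|E'\|_\mr{HS}$; inserting $M_2(f) \le M_2(g)/2$ from Step~1 gives the advertised prefactor $\sqrt{d}/4$. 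For the second, Cauchy--Schwarz and the remainder bound give $\tfrac{1}{4}\|\Lambda^{-1}\|_\mr{op} M_3(f)\,\mb{E}\snorm{X'-X}^3$, and the sharpest derivative bound for $M_3(f)$ (together with a slightly refined integral form of the Taylor remainder) yields the stated constant $\tfrac{1}{9}$.

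\textbf{Main obstacle.} The algebraic steps using exchangeability are elementary once phrased as above; the genuine technical content lies in obtaining the sharp derivative bounds $M_k(f) \le M_k(g)/k$ on the OU Stein solution, which requires careful integration by parts against the Gaussian kernel and use of the $e^{-kt}$ contraction. Achieving the precise numerical constants $\sqrt{d}/4$ and $1/9$ in the final inequality demands the cleanest such estimates combined with a tight Taylor-remainder calculation rather than the crude form used above.
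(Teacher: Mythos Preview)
The paper does not prove this theorem at all: it is quoted verbatim as a specialization of \cite[Theorem~3]{Mec09} and used as a black box. Your proposal reproduces the standard Stein's-method-via-exchangeable-pairs argument that Meckes gives in the cited reference (OU semigroup Stein solution, antisymmetrization from exchangeability, Taylor expansion, Hilbert--Schmidt bounds), so there is nothing to compare against in this paper; your outline is correct in substance and matches the original source rather than anything the present authors wrote.
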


We will need the following bound on moments of standard Gaussian random variables (which is a special case of a phenomenon called \emph{hypercontractivity}).
\begin{theorem}[{\cite[Theorem~9.21]{O14}}]\label{thm:gauss-moment}
Let $f$ be a polynomial in $n$ variables of degree at most $d$. Let $\vec x=(x_1,\ldots,x_n)$ either be a vector of independent Rademacher random variables or a vector of independent standard Gaussian random variables. Then for any real number $q\geq 2$, we have
\[\mb{E}\big[|f(\vec x)|^q\big]^{1/q}\le \big(\sqrt{q-1}\big)^d\mb{E}\big[f(\vec x)^2\big]^{1/2}.\]
\end{theorem}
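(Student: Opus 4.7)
The plan is to prove this classical hypercontractivity inequality via the standard Bonami--Beckner pipeline: establish a two-point inequality, tensorize through the noise operator, then specialize to degree-$d$ polynomials. The Gaussian case will then follow from the Rademacher case by central limit approximation.

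The first and most delicate step is the \emph{two-point inequality}: for $x\sim\mr{Unif}\{\pm 1\}$, real $a,b$, and $\rho:=1/\sqrt{q-1}$, one has
\[\mb{E}\big[|a+\rho bx|^q\big]^{1/q}\le \mb{E}\big[(a+bx)^2\big]^{1/2}.\]
By homogeneity I can take $a=1$ and $b=t$ with $0\le t\le 1$, reducing to the real inequality $(1+\rho t)^q+(1-\rho t)^q\le 2(1+t^2)^{q/2}$. One verifies this by comparing power series in $t$: the constant and $t^2$ coefficients match precisely when $\rho^2(q-1)=1$, and each subsequent pair of coefficients can be compared using convexity of $x\mapsto x^{q/2}$ together with direct binomial-coefficient estimates. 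This is where the sharp exponent $\sqrt{q-1}$ enters.

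Next I would tensorize and then extract the degree-$d$ form. Define the noise operator $T_\rho$ on functions $\{\pm 1\}^n\to\mb{R}$ by acting on Fourier characters as $T_\rho\chi_S=\rho^{|S|}\chi_S$. The two-point inequality is exactly $\|T_\rho f\|_q\le\|f\|_2$ in the case $n=1$. An induction on $n$ using Minkowski's integral inequality applied to the coordinate decomposition $f=f_0+x_if_1$ (with $f_0,f_1$ not depending on $x_i$) tensorizes the bound to $\|T_\rho f\|_q\le\|f\|_2$ for every $n$. Given $f$ of degree $\le d$, define $g$ by $\wh g(S)=\rho^{-|S|}\wh f(S)$; then $T_\rho g=f$ and $\|g\|_2^2=\sum_S\rho^{-2|S|}\wh f(S)^2\le\rho^{-2d}\|f\|_2^2$, so
\[\mb{E}\big[|f(\vec x)|^q\big]^{1/q}=\|T_\rho g\|_q\le\|g\|_2\le(\sqrt{q-1})^d\|f\|_2,\]
which is the Rademacher case.

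For the Gaussian case, I would approximate each $x_i\sim\mc{N}(0,1)$ by a normalized sum $(r_{i,1}+\cdots+r_{i,N})/\sqrt{N}$ of independent Rademachers $r_{i,j}$. Then $f(\vec x)$ becomes a degree-$d$ polynomial $f_N$ in $nN$ Rademachers, to which the Rademacher bound applies uniformly in $N$. All polynomial moments of $f_N$ converge to the corresponding Gaussian moments by CLT together with uniform integrability (itself guaranteed by the already-proved Rademacher hypercontractivity applied with any higher exponent), so the inequality passes to the Gaussian limit. The main obstacle is the sharp two-point inequality; tensorization, degree reduction, and the CLT passage are all standard Fourier-analytic bookkeeping once that calculus step is in hand.
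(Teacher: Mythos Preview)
The paper does not prove this statement; it is quoted verbatim as \cite[Theorem~9.21]{O14} and used as a black box. Your outline is the standard Bonami--Beckner argument (two-point lemma, tensorization via Minkowski, degree-$d$ extraction through the noise operator, Gaussian case by CLT with uniform integrability), which is essentially the proof appearing in that reference, so there is nothing in the paper to compare against and your sketch is correct.
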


\begin{proposition}\label{prop:stein-computation}
Let $m=\tau_\mr{pre}=\lfloor\alpha_cn-\eta\log n\rfloor$ and $K=\lceil\eta\log n\rceil$. We define $V(M)$ as in \cref{lem:clt} and consider $n$ sufficiently large. For $g\in\mc{C}^3(\mb{R}^K)$, we have
\[\big|\mb{E} g(V(G^{(0)}))-\mb{E}_{Z\sim\mc{N}(0,I_K)} g(Z)\big|\le (M_2(g) + M_3(g))n^{-1/4}.\]
\end{proposition}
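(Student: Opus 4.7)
The plan is to apply Meckes' exchangeable-pair multivariate CLT (\cref{thm:exchangeable}) with the canonical Ornstein--Uhlenbeck exchangeable pair. Let $G^{\ast}=(G^{\ast}_j)_{j\ge 1}$ be an independent iid copy of $G$ and, for a small parameter $t>0$ to be tuned, set $G'=e^{-t}G+\sqrt{1-e^{-2t}}\,G^{\ast}$; put $X=V(G)$ and $X'=V(G')$, so $(X,X')$ is exchangeable in $\mb{R}^K$. The key structural observation is that the distinctness of the $(j_\ell)$ and $(i_\ell)$ indices in \cref{def:cycle-count} means that for $k\ge 2$ each summand of $C_k(m,G)$ is a squarefree monomial in $2k$ distinct Gaussian entries, so $C_k(m,G)$ is a pure multivariate Hermite polynomial of total degree $2k$; for $k=1$, after the $-\sqrt{mn}$ subtraction $C_1(m,G)=(mn)^{-1/2}\sum_{j,i}(G_{j,i}^2-1)$ is still pure Hermite (of degree $2$). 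Mehler's identity then gives $\mb{E}[C_k(m,G')\mid G]=e^{-2kt}C_k(m,G)$, whence
\[\mb{E}[X'-X\mid G]=-\Lambda X,\qquad \Lambda=\mr{diag}(1-e^{-2t},1-e^{-4t},\ldots,1-e^{-2Kt}),\]
verifying the first hypothesis of \cref{thm:exchangeable}.

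For the second hypothesis, decompose $V'_k=e^{-2kt}V_k+R_k$ with $\mb{E}[R_k\mid G]=0$. A direct computation yields
\[\mb{E}[(V'_k-V_k)(V'_l-V_l)\mid G]=(1-e^{-2kt})(1-e^{-2lt})V_kV_l+\mb{E}[R_kR_l\mid G],\]
and the stationarity identity $\mb{E}[R_kR_l]=(1-e^{-2(k+l)t})\mb{E}[V_kV_l]$, combined with the Gaussian cycle-moment computation $\mb{E}[V_kV_l]=\delta_{kl}+O(1/n)$ (two distinct Gaussian cycles on disjoint row sets contribute zero), shows the unconditional mean of this conditional covariance matches $2\Lambda_{kl}\delta_{kl}$ up to negligible bias. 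So the second hypothesis holds with error matrix
\[E'_{kl}=(1-e^{-2kt})(1-e^{-2lt})(V_kV_l-\mb{E}[V_kV_l])+\bigl(\mb{E}[R_kR_l\mid G]-\mb{E}[R_kR_l]\bigr)+O\!\left(\tfrac{1-e^{-2kt}}{n}\right).\]

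The technical heart is bounding $\mb{E}\snorm{E'}_{\mr{HS}}$ and $\mb{E}\snorm{X'-X}_2^3$. Both $V_kV_l-\mb{E}[V_kV_l]$ and $\mb{E}[R_kR_l\mid G]-\mb{E}[R_kR_l]$ are polynomials in the entries of $G$ of degree at most $2(k+l)\le 4K$. Gaussian hypercontractivity (\cref{thm:gauss-moment}) at $q=4$ converts $L^4$ into $L^2$ at the cost of a blow-up of $3^{O(K)}=n^{O(\eta)}$, and the essential combinatorial input---mirroring the variance analysis in \cref{lem:gaussian-stability}---is that two Gaussian cycles on disjoint row sets vanish in expectation, so every nonvanishing pairing shares at least one row and incurs a factor $1/n$ beyond the naive count. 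This yields $\sum_{k,l}\mb{E}[(E'_{kl})^2]\lesssim 9^{O(K)}t^2/n$. In parallel, $\mb{E}[(V'_k-V_k)^2]=2(1-e^{-2kt})\mb{E}[V_k^2]\lesssim kt$, and boosting via hypercontractivity then Jensen gives $\mb{E}\snorm{X'-X}_2^3\lesssim K^{3/2}\cdot 9^{O(K)}\,t^{3/2}$.

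Assembling via \cref{thm:exchangeable} and using $\snorm{\Lambda^{-1}}_{\mr{op}}=O(1/t)$, the error is bounded by
\[O\!\left((M_2(g)+M_3(g))\bigl(\sqrt{K}\cdot 3^{O(K)}/\sqrt{n}+K^{3/2}\cdot 9^{O(K)}\,t^{1/2}\bigr)\right).\]
Choosing $t$ as a slightly-smaller-than-$n^{-1/2}$ power of $n$ (to absorb the residual $9^{O(K)}=n^{O(\eta)}$ factor in the $M_3$ term) and taking $\eta$ sufficiently small so that the $O(\eta)$ blow-ups are dominated by the $1/4$ target yields the claimed bound $(M_2(g)+M_3(g))n^{-1/4}$. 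The main obstacle will be the careful combinatorial accounting in $\sum_{k,l}\mb{E}[(E'_{kl})^2]$: one must extract the $1/n$ saving from row-index coincidences while keeping the degree-dependent hypercontractivity constants under control through the smallness of $\eta$.
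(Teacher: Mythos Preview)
Your approach is correct and takes a genuinely different route from the paper. The paper constructs the exchangeable pair by resampling a single uniformly chosen entry $G_I$ with an independent copy, giving $\Lambda=\mr{diag}((2k-\mbm{1}_{k=1})/(mn))$ and $\snorm{\Lambda^{-1}}_{\mr{op}}=mn$; the combinatorics for $E'$ are then organized around the auxiliary sums $C_k(m,G,I)$ and $C_k^\ast(m,G,I)$ of cycle monomials passing through the resampled index. Your Ornstein--Uhlenbeck pair instead exploits that each $C_k$ lies exactly in the $2k$-th Wiener chaos, so Mehler's formula gives the clean eigenvalue relation $\mb{E}[V_k'\mid G]=e^{-2kt}V_k$ with no special treatment of $k=1$, and the tower property transfers this to conditioning on $X$. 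That part of your outline is genuinely slicker than the paper's.

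The trade-off is that the essential difficulty is only relocated, not removed. In the paper the required $1/n$ saving in $\mb{E}\snorm{E'}_{\mr{HS}}^2$ comes from the explicit cycle-overlap count (the argument that the overlaid $4$-cycle graph has $v\le 2k_1+2k_2-1$ vertices, with the refined path-component count for smaller $v$). In your approach the same saving must be extracted from $\on{Var}(\mb{E}[R_kR_l\mid G])$: the $G^\ast$-integration does force the two underlying cycles to share at least one entry, and a second overlap is needed when squaring, so the mechanism you describe is right---but turning ``shares a row'' into a rigorous $9^{O(K)}t^2/n$ bound requires the same style of careful overlap enumeration that the paper carries out, and you have only asserted the outcome. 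A minor further difference is that you carry a free parameter $t$ to tune at the end, whereas the paper's bound $mn(\sqrt{\log n}\cdot n^{-5/2+1/12}+n^{-11/4})\le n^{-1/3}$ falls out directly with no optimization.
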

\begin{proof}
We now define the setup for our exchangeable pairs. Let 
\[G = G^{(0)}= (g_{j,i})_{(j,i)\in[m]\times [n]}\]
and 
\[G' = (g_{i,j})_{(j,i)\in([m]\times [n])\setminus I} \cup (g_I)\]
where $I$ is an index sampled from $[m]\times [n]$ uniformly at random and the $g_I$ is an independent standard Gaussian. The exchangeable pair of random variables we will consider is:
\[V = \bigg(\frac{C_1(m,G)}{\sqrt{2\cdot 1}},\ldots,\frac{C_K(m,G)}{\sqrt{2K}}\bigg),\qquad V' = \bigg(\frac{C_1(m,G')}{\sqrt{2\cdot 1}},\ldots,\frac{C_K(m,G')}{\sqrt{2K}}\bigg).\]
Since $I$ is a uniformly random element from $[m]\times [n]$ notice that
\[\mb{E}[V'-V|V] = -\text{diag}\bigg(\frac{2k-\mbm{1}_{k=1}}{mn}\bigg)_{k\in[K]}V\]
so define
\[\Lambda = \text{diag}\bigg(\frac{2k-\mbm{1}_{k=1}}{mn}\bigg)_{k\in[K]}.\]
In order to apply \cref{thm:exchangeable}, we take 
\[E' = \mb{E}[(V'-V)(V'-V)^{T} - 2\Lambda|G]\]
and note that this satisfies the required condition for $E'$ as $V$ is a measurable function of $G$.

We bound each of the terms appearing in \cref{eq:error-bound}. First, $\snorm{\Lambda^{-1}}_{\mr{op}} = mn$. Second, note that  
\begin{align*}
\mb{E}\snorm{X'-X}_2^3 &\le\mb{E}\bigg[\bigg(\sum_{k=1}^K(C_k(m,G)-C_k(m,G'))^2\bigg)^{3/2}\bigg]\\
&\le K^{1/2} \cdot \mb{E}\bigg[\sum_{k=1}^K|C_k(m,G)-C_k(m,G')|^3\bigg]\\
&\le n^{1/5}\cdot \sum_{k=1}^K\bigg(\mb{E}\bigg[\bigg|C_k(m,G)-C_k(m,G')\bigg|^2\bigg]\bigg)^{3/2}\\
&= n^{1/5}\cdot \sum_{k=1}^K(2k/(mn)\mb{E}[C_k(m,G)^2])^{3/2}\\
&\le n^{-11/4}
\end{align*}
where we have applied Holder's inequality, \cref{thm:gauss-moment}, and computed the variance using orthogonality of various monomials in order to prove the desired result (note that all but at most $2k/(mn)$ fraction of the monomials are canceled in $C_k(m,G)-C_k(m,G')$).

Finally, we compute
\begin{align*}
\mb{E}\snorm{E'}_{\mr{HS}}&\le (\mb{E}\snorm{E'}_{\mr{HS}}^2)^{1/2}\\
&\le \bigg(2\sum_{1\le k_1<k_2\le K}\mb{E}\big[\mb{E}[(C_{k_1}(m,G)-C_{k_1}(m,G'))(C_{k_2}(m,G)-C_{k_2}(m,G'))|G]^2\big] \\
&\qquad + \sum_{1\le k\le K}\on{Var}\big[\mb{E}[(C_k(m,G)-C_k(m,G'))^2|G]\big]\bigg)^{1/2}
\end{align*}
where we have used that $2k(E')_{k,k} = \mb{E}[(C_k(m,G)-C_k(m,G'))^2|G] - \mb{E}[(C_k(m,G)-C_k(m,G'))^2]$. We now handle the off-diagonal and on-diagonal contribution separately. For ease of notation, given an index $I\in [m]\times [n]$ we define 
\begin{align*}
C_k(m, M, I)&=\bigg(\frac{1}{\sqrt{n}}\bigg)^k\bigg(\frac{1}{\sqrt{m}}\bigg)^k\sum_{\substack{i_1,\ldots,i_k\in[n]\text{ distinct}\\j_1,\ldots,j_k\in[m]\text{ distinct}\\ I\in \bigcup_{k=1}^\ell\{(j_\ell,i_\ell),(j_\ell,i_{\ell+1})\}}}\prod_{i=1}^kM_{j_\ell,i_\ell}M_{j_\ell,i_{\ell+1}},\\
C_k^{\ast}(m, M, I)&=\bigg(\frac{1}{\sqrt{n}}\bigg)^k\bigg(\frac{1}{\sqrt{m}}\bigg)^k\sum_{\substack{i_1,\ldots,i_k\in[n]\text{ distinct}\\j_1,\ldots,j_k\in[m]\text{ distinct}\\ I\in \bigcup_{k=1}^\ell\{(j_\ell,i_\ell),(j_\ell,i_{\ell+1})\}}}\prod_{(j,i)\in\bigcup_{k=1}^\ell\{(j_\ell,i_\ell),(j_\ell,i_{\ell+1})\}\setminus I}M_{j,i}.
\end{align*}
We consider the off-diagonal contribution first. We have 
\begin{align*}
&\mb{E}\big[\mb{E}[(C_{k_1}(m,G)-C_{k_1}(m,G'))(C_{k_2}(m,G)-C_{k_2}(m,G'))|G]^2\big] \\
&= \frac{1}{m^2n^2}\mb{E}\bigg[\bigg(\sum_{I\in [m]\times [n]}C_{k_1}(m, G, I)C_{k_2}(m, G, I) + C_{k_1}^{\ast}(m, G, I)C_{k_2}^{\ast}(m, G, I)\bigg)^2\bigg]\\
&\le \frac{2}{m^2n^2}\mb{E}\bigg[\bigg(\sum_{I\in [m]\times [n]}C_{k_1}(m, G, I)C_{k_2}(m, G, I)\bigg)^2 + \bigg(\sum_{I\in [m]\times [n]}C_{k_1}^{\ast}(m, G, I)C_{k_2}^{\ast}(m, G, I)\bigg)^2\bigg].\\
\end{align*}
Note that as $k_1\neq k_2$, we have that $C_{k_1}(m, G, I)C_{k_2}(m, G, I)$ and $C_{k_1}^{\ast}(m, G, I)C_{k_2}^{\ast}(m, G, I)$ are mean zero random variables. We will bound
\[\mb{E}\bigg[\bigg(\sum_{I\in [m]\times [n]}C_{k_1}(m, G, I)C_{k_2}(m, G, I)\bigg)^2\bigg].\]
Expanding, each term is of the form $C_{k_1}(m,G,I_1)C_{k_2}(m,G,I_1)C_{k_1}(m,G,I_2)C_{k_2}(m,G,I_2)$. A term contributes precisely if the monomial in $G$ has all even powers. Furthermore, any $G_{j,i}$ appears at most $4$ times, which means any contribution is of size at most $3^{4K}=(O(1))^K$. Therefore, we are primarily interested in counting the number of nonzero terms that arise. We have a multigraph $H$ coming from layering two cycles of length $k_1$ and two cycles of length $k_2$, with possible overlaps (including two guaranteed overlaps at edges $I_1,I_2$). Let there be $v$ vertices, and let $H'$ be the multigraph formed by halving the multiplicities of $H$. Note that each vertex has at least two outgoing edges to different vertices in $H'$, and the total multiplicity of edges is $2k_1+2k_2$. We deduce that there are at most $2k_1+2k_2$ vertices, with equality precisely when $H'$ is a disjoint union of cycles. However, $H'$ must be connected and can be seen to not be a single cycle since $k_1\neq k_2$. Thus in fact $v\le 2k_1+2k_2-1$.

If $v\le 2k_1+2k_2-8\log\log n$ then there are at most $(8k)^{8k}n^v\le n^{2k_1+2k_2-4}$ choices of term, using $k\le\eta\log n$. Otherwise let $r=2k_1+2k_2-v$ and note $1\le r\le 8\log\log n$. We see that $H'$ has at most $O(r)$ many vertices of degree at least $3$, where we include multiplicity (since there are $2k_1+2k_2$ edges with multiplicity, $2k_1+2k_2-r$ vertices, and each vertex has at least two distinct neighbors). Notice that between such vertices, $H'$ must have paths and there are at most $K^{r^2}$ ways to choose the sizes of all these paths. After doing so, there are $4^{O(K)}$ ways to assign these edges to various cycles. Finally, there are at most $e^{O(K)}$ ways to order these into cycles: outside of the $r$ higher degree vertices, we have bounded degree so a bounded number of choices for the next edge at every stage, and among $r\le 8\log\log n = O(\log K)$ vertices we have at most $O(K)$ choices, for $K^{O(\log K)}$ choices. Since $K^{r^2}=K^{O((\log K)^2)}$ we easily find a total of at most $(O(1))^Kn^{2k_1+2k_2-1}$ total terms.

Plugging in, we find the contribution to $(\mb{E}\snorm{E'}_\mr{HS})^2$ from these off-diagonal terms is at most
\[\frac{1}{m^2n^2}\cdot(O(1))^Kn^{2k_1+2k_2-1}\cdot\bigg(\frac{1}{(mn)^{k_1}}\bigg)\bigg(\frac{1}{(mn)^{k_2}}\bigg).\]
The total contribution is at most $n^{-5+1/6}$. The situation for $C^\ast$ is even simpler, and we can obtain a bound of $n^{-5+1/6}$ analogously. Additionally, this analysis is easily seen to hold when $k_1=1$.

For the on-diagonal contribution, the analysis is quite similar. For $k=1$, straightforward computation gives a contribution to $(\mb{E}\snorm{E'}_\mr{HS})^2$ of at most $O(n^{-6})$.

For $k\ge 2$, note that we have 
\begin{align*}
&\on{Var}\big[\mb{E}[(C_k(m,G)-C_k(m,G'))^2|G]\big] \\
&= \frac{1}{m^2n^2}\on{Var}\bigg[\sum_{I\in [m]\times [n]}(C_k(m, G, I)^2 + C_k^{\ast}(m, G, I)^2)\bigg]\\
&\le \frac{2}{m^2n^2}\bigg(\on{Var}\bigg[\sum_{I\in [m]\times [n]}C_k(m, G, I)^2\bigg] + \on{Var}\bigg[\sum_{I\in [m]\times [n]}C_k^{\ast}(m, G, I)^2\bigg]\bigg).
\end{align*}
As before we consider the $C$ term, since the $C^\ast$ term can be analyzed analogously. We have
\[\on{Var}\bigg[\bigg(\sum_{I\in [m]\times [n]}C_k(m, G, I)^2\bigg)\bigg]=\mb{E}\bigg[\bigg(\sum_{I\in [m]\times [n]}C_k(m, G, I)^2\bigg)^2\bigg] - \mb{E}\bigg[\sum_{I\in [m]\times [n]}C_k(m, G, I)^2\bigg]^2.\]
Again view this as an expansion into $4$ overlapping cycles as in the off-diagonal case. Here however as the two cycles in each pair coming from $k_1,k_2$ are isomorphic, one can overlap them perfectly, which was not allowed in the prior analysis (which was used to argue that in fact $v\le 2k_1+2k_2-1$, ruling out $v=2k_1+2k_2$). In this remaining case, we can see the contribution is exactly cancelled however by the subtracted term. The proof can thus proceed similarly to the off-diagonal case, and we obtain a bound $n^{-5+1/6}$ in this case as well.

Now we deduce
\[\snorm{\Lambda^{-1}}_\mr{op}\bigg(\frac{\sqrt{k}}{4}\mb{E}\snorm{E'}_\mr{HS}+\frac{1}{9}\mb{E}\snorm{X'-X}_2^3\bigg)\le mn(\sqrt{\log n}(4n^{-5+1/6})^{1/2}+n^{-11/4})\le n^{-1/3},\]
and therefore \cref{thm:exchangeable} finishes.
\end{proof}

We are finally in position to deduce \cref{lem:clt}.
\begin{proof}[{Proof of \cref{lem:clt}}]
The result is ultimately a straightforward combination of \cref{lem:gaussian-stability} and \cref{prop:stein-computation}. We prove the statement for $G^{(2)}(t)$; the proof in the other two cases are identical. Note that 
\begin{align*}
&\mb{E}\snorm{V(G^{(2)}(t)) - 2\mu -V(G^{(0)})}_2\\
&\le (\mb{E}\snorm{V(G^{(2)}(t)) - 2\mu -V(G^{(0)})}_2^2)^{1/2}\\
&\le \bigg(\snorm{\mb{E}[V(G^{(2)}(t)) - 2\mu -V(G^{(0)})]}_2^{2} + \sum_{1\le k\le K}\on{Var}[\wt{C}_k(m,t,G,\sigma(t,G^{(2)}(t)))-C_k(m,G)]\bigg)\\
&\le n^{-1/5}
\end{align*}
where we have applied \cref{lem:gaussian-stability}. This immediately implies that 
\begin{align*}
|\mb{E}[g(V(G^{(2)}(t)) - 2\mu) -g(V(G^{(0)})]|&\le M_1(g) \cdot \mb{E}[\snorm{V(G^{(2)}(t)) - 2\mu -V(G^{(0)})}_2]\\
&\le M_1(g)\cdot n^{-1/5}.
\end{align*}
The desired result then follows from triangle inequality and \cref{prop:stein-computation}.
\end{proof}

\subsection{Miscellaneous estimates}\label{sub:miscellaneous-estimates}
Finally, we will additionally require the following basic estimate on the binomial coefficient.
\begin{claim}\label{clm:entropy}
For $1\le k\le n-1$ we have that 
\[\binom{n}{k}\le \sqrt{n/(k(n-k))}\exp\bigg(-k\log\bigg(\frac{k}{n}\bigg)-(n-k)\log\bigg(\frac{n-k}{n}\bigg)\bigg).\]
Furthermore for $|k|\le \sqrt{n}\log n$ and $n$ sufficiently large such that $(n+k)/2\in \mb{Z}$, we have that
\[\binom{n}{(n+k)/2} = \frac{2^n\sqrt{2}}{\sqrt{\pi n}} \exp\bigg( - \frac{k^2}{2n}\pm n^{-1/2}\bigg).\]
\end{claim}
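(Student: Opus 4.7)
The approach for both parts is a careful application of Stirling's formula with explicit non-asymptotic error control, namely $n! = \sqrt{2\pi n}\,(n/e)^n\,e^{\theta_n/(12n)}$ with $\theta_n\in[0,1]$, which I will use uniformly throughout.

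For the first inequality, I will expand $\binom{n}{k} = n!/(k!(n-k)!)$ by using the upper Stirling bound on $n!$ and the lower Stirling bounds on both $k!$ and $(n-k)!$. The factors $e^{-n}$ cancel against $e^{-k}\cdot e^{-(n-k)}$, the ratio $n^n/(k^k(n-k)^{n-k})$ is exactly $\exp(-k\log(k/n) - (n-k)\log((n-k)/n))$, and the square-root factors combine to $(2\pi)^{-1/2}\sqrt{n/(k(n-k))}$. The only residual multiplicative constant is $e^{1/(12n)}/\sqrt{2\pi} \le e^{1/12}/\sqrt{2\pi} < 1$, so the claimed bound follows immediately.

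For the second estimate, I will apply the same Stirling formula to all three factorials in $\binom{n}{(n+k)/2}$; the $e^{-n}$ and $e^{\pm k/2}$ factors cancel exactly. The resulting square-root prefactor is
\[\sqrt{\frac{n}{2\pi\cdot\tfrac{n+k}{2}\cdot\tfrac{n-k}{2}}}\,(1+O(1/n)) = \sqrt{\frac{2}{\pi n}}\,(1+O(k^2/n^2)),\]
using $(n+k)(n-k)/4 = n^2/4 - k^2/4$ together with $|k| \le \sqrt{n}\log n$. For the main exponential, after pulling out $2^n$ I am left with $\exp(-A)$ where $A = \tfrac{n+k}{2}\log(1+k/n) + \tfrac{n-k}{2}\log(1-k/n)$. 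I will Taylor expand in $x = k/n$; crucially, the odd-power contributions cancel between the two terms, giving $(1+x)\log(1+x) + (1-x)\log(1-x) = x^2 + x^4/6 + O(x^6)$, and hence $A = k^2/(2n) + O(k^4/n^3)$.

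Finally, under the hypothesis $|k| \le \sqrt{n}\log n$, the Taylor remainder is $O((\log n)^4/n)$, the prefactor error is $O((\log n)^2/n)$, and the Stirling remainder is $O(1/n)$; all three are $o(n^{-1/2})$ and collectively fit inside a single $\exp(\pm n^{-1/2})$ factor for $n$ sufficiently large. I do not expect any genuine obstacle here. The only observation that matters is the cancellation of the odd-power terms in the Taylor expansion of $A$: without it the leading error would be $O(k^3/n^2) = O((\log n)^3 n^{-1/2})$, which would fail to fit inside the desired $\pm n^{-1/2}$ window.
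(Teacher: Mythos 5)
Your proof is correct. The paper states \cref{clm:entropy} without proof, treating it as a standard consequence of Stirling's formula, so there is no argument in the paper to compare against; your Stirling-based derivation is exactly the natural one. All the steps check out: the constant $e^{1/(12n)}/\sqrt{2\pi}<1$ in the first part, the identity $\sqrt{n/(ab)}/\sqrt{2\pi}=\sqrt{2/(\pi n)}(1+O(k^2/n^2))$ for $a=(n+k)/2$, $b=(n-k)/2$, and the even-only Taylor expansion $(1+x)\log(1+x)+(1-x)\log(1-x)=x^2+x^4/6+O(x^6)$ giving $A=k^2/(2n)+O(k^4/n^3)$, with all residual errors $O((\log n)^4/n)=o(n^{-1/2})$. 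Your closing remark about why the odd-term cancellation is essential is also accurate: a stray $O(k^3/n^2)$ term would be of order $(\log n)^3 n^{-1/2}$, which does not fit inside $\pm n^{-1/2}$.
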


\section{Moment computations}\label{sec:moment}
In this section we prove \cref{lem:second-moment} using the tools we have developed so far. We separate the first and second moment computations.
\subsection{First moment}\label{sub:first-moment}
We first prove the first moment.
\begin{proof}[Proof of \cref{eq:first-moment}]
We choose a function $h\colon\mb{R}\to[0,1]$ which is $1$ on $[-L,\infty]$, $0$ on $[-\infty,-L-1]$ and in $\mc{C}^\infty(\mb{R})$. Notice that linearity of expectation yields $\mb{E}[X(G)] = 2^np^m$ and 
\begin{align*}
\mb{E}[X(G)\exp(-Y_K(m,G)&\mbm{1}[Y_K(m,G)\ge -L])]\ge\mb{E}[X(G)\exp(-Y_K(m,G)h(Y_K(m,G))]\\
&=2^n\mb{E}[\mbm{1}_{v_n\in S_{\tau_\mr{pre}}(G)}\exp(-Y_K(m,G)h(Y_K(m,G))]\\
&=2^np^m\mb{E}[\exp(-Y_K(m,G^{(1)})h(Y_K(m,G^{(1)}))].
\end{align*}
(Recall $v_n\in\mb{R}^n$ is all $1$s.) Note 
\[Y_K(m,M) = \sum_{k=1}^K\frac{2(2\beta)^kC_k(m,M)-(2\beta)^{2k}}{4k} = \sum_{k=1}^K\frac{(2\beta)^k}{\sqrt{2k}}\cdot \frac{C_k(m,M)}{\sqrt{2k}}-\sum_{k=1}^K\frac{(2\beta)^{2k}}{4k}.\] 
Viewing $\exp(-Y_K(m,G^{(1)})h(Y_K(m,G^{(1)}))$ as a function of $C_k(m,G^{(1)})$ for $k\in[K]$, by the product rule we see that the function has first, second, and third derivatives of size $O(n^{2\eta})$. Let $Z\sim\mc{N}(\mu^\ast,\sigma^2)$ with 
\[\mu^\ast = \sum_{k=1}^K\frac{(2\beta)^k}{\sqrt{2k}}\cdot \frac{(2\beta)^k}{\sqrt{2k}}-\sum_{k=1}^K\frac{(2\beta)^{2k}}{4k} = \sum_{k=1}^K\frac{(2\beta)^{2k}}{4k},\qquad\sigma^2 = \sum_{k=1}^K\frac{(2\beta)^{2k}}{2k}.\]
Applying \cref{eq:conditional-1} of \cref{lem:clt}, we find that
\begin{align*}
\mb{E}[X(G)\exp(-Y_K(m,G)\mbm{1}[Y_K(m,G)\ge -L])]&\ge 2^np^m(\mb{E}[\exp(-Zh(Z))]-n^{2\eta - 1/6})\\
&\ge 2^np^m(1-n^{-1/8})
\end{align*}
where we have used that $\eta$ is a sufficiently small constant in the final bound, and that
\[\mb{E}\exp(-Zh(Z))=(1\pm n^{-2})\mb{E}\exp(-Z)=(1\pm n^{-2})\exp\bigg(-\mu^\ast+\frac{\sigma^2}{2}\bigg)=1\pm n^{-1/2}.\qedhere\]
\end{proof}

\subsection{Second moment}\label{sub:second-moment}
The second moment computation in \cref{lem:second-moment} closely follows the computation given in the work of Abbe, Li, and Sly \cite{ALS21}; we thus provide a proof tracking quantitative aspects but omitting various routine calculations. We first prove \cref{eq:pair-structure} which handles the majority of the range of overlaps; here our approach is essentially identical to that of Altschuler \cite{Alt22}.
\begin{proof}[Proof of \cref{eq:pair-structure}]
By \cref{clm:entropy}, symmetry, and independence of the vectors of $G$ note that 
\begin{align*}
&\sum_{t\in[-(n-1),-n^{1/2}\log n]\cup [n^{1/2}\log n,n-1]}2^n\binom{n}{(n+t)/2}\mb{P}[v_n,v_t\in S_{\tau_\mr{pre}}(G)]\\
&\le 2n\sum_{t\in [n^{1/2}\log n,n-1]}2^n\exp(nH(1/2 + t/(2n)))\mb{P}[v_n,v_t\in S_{\tau_\mr{pre}}(G)]\\
&= 2n\sum_{1\le t\le n/2-\sqrt{n}\log n}2^n\exp(nH(t/n))(q(t/n))^{m}\\
&\le n^{2}\sum_{1\le t\le n/2-\sqrt{n}\log n}2^n\exp(nH(t/n))(q(t/n))^{\alpha_cn}
\end{align*}
where we have used that by the Gaussian correlation inequality that $q(t)\ge q(1/2) = \mb{P}[|Z|\le\kappa]^2>0$ and that $\eta$ is a sufficiently small constant. We first handle $1\le t\le\epsilon n/2$. Notice that 
\begin{align*}
\sum_{1\le t\le\epsilon n/2}2^n\exp(nH(t/n))(q(t/n))^{\alpha_cn}&\le n \cdot \max_{1/n\le t\le\epsilon/2}2^n\exp(nH(t/n))\cdot (q(t/n))^{\alpha_cn}\\
&\le n \cdot 2^n\exp(nH(1/n))\cdot (q(1/n))^{\alpha_cn}\\
&\le n \cdot \exp(-\Omega(\sqrt{n})).
\end{align*}
We have implicitly used \cref{S3,S5} from \cref{lem:computation} as well as the fact that $q(0)^{\alpha_cn}=p^{\alpha_cn}=2^{-n}$.

Next, notice that the function $F(\gamma)$ is easily seen to have bounded derivatives away from the endpoints of the interval $[0,1]$ and therefore as $F'(1/2) = 0$ (by symmetry) and $F''(1/2)\le-\eps$ (\cref{S2}), we have that there exists $\delta>0$ such that 
\[F(1/2+\tau)\le F(1/2)-\delta\tau^2\]
for $|\tau|\le\delta$. This implies that 
\begin{align*}
&\sum_{(1/2-\delta)n\le t\le n/2-\sqrt{n}\log n}2^n\exp(nH(t/n))(q(t/n))^{\alpha_cn}\\
&\le n\cdot 2^n(F(1/2)-\delta(\log n)^2/n)^n\le\exp(-(\log n)^{5/3})
\end{align*}
given that $n$ is sufficiently large. Finally for the intermediate range, note that 
\begin{align*}
\sum_{\epsilon n/2\le t\le (1/2-\delta)n}2^n\exp(-nH(t/n))(q(t/n))^{\alpha_cn}&\le n\cdot 2^n\cdot \max_{\epsilon/2\le \beta \le 1/2-\delta} F(\beta)^n\\
&\le\exp(-\Omega(n))
\end{align*} 
where we have used \cref{S4} with $a=\eps,b=1/2-\delta$ (the above analysis demonstrates that $F(\eps)<F(0)$ and $F(1/2-\delta)\le F(1/2)-\delta^3$).
\end{proof}

We now handle the second moment in full generality. 
\begin{proof}[Proof of \cref{eq:second-moment}]
Notice that linearity of expectation and conditioning gives
\begin{align*}
&\mb{E}[X(G)^2\exp(-2Y_K(m,G)\mbm{1}[Y_K(m,G)\ge -L])] 
\\&= \sum_{\substack{-n\le t\le n\\ (n+t)/2\in \mb{Z}}}2^n\binom{n}{(n+t)/2}\mb{P}[v_n,v_t\in S_{\tau_{\on{pre}}}(G)]\mb{E}[\exp(-2Y_K(m,G^{(2)}(t))\mbm{1}[Y_K(m,G^{(2)}(t))\ge -L])].
\end{align*}
We first handle the range when $n^{1/2}\log n\le |t|\le n-1$. By \cref{eq:pair-structure} we have that 
\begin{align*}
&\sum_{\substack{n^{1/2}\log n\le |t|\le n-1\\ (n+t)/2\in \mb{Z}}}2^n\binom{n}{(n+t)/2}\mb{P}[v_n,v_t\in S_{\tau_{\on{pre}}}(G)]\mb{E}[\exp(-2Y_K(m,G^{(2)}(t))\mbm{1}[Y_K(m,G^{(2)}(t))\ge -L])]\\
&\le \exp(2L)\cdot \sum_{\substack{n^{1/2}\log n\le |t|\le n-1\\ (n+t)/2\in \mb{Z}}}2^n\binom{n}{(n+t)/2}\mb{P}[v_n,v_t\in S_{\tau_{\on{pre}}}(G)]\\
&\le n^{2\eta}\exp(-(\log n)^{3/2})\le\exp(-(\log n)^{4/3})
\end{align*}
which is decaying faster than any polynomial. We next handle the case where $t = \pm n$; notice the contribution in this case is 
\begin{align*}
&2\cdot 2^n\mb{P}[v_n\in S_{\tau_{\on{pre}}}(G)]\mb{E}[\exp(-2Y_K(m,G^{(2)}(n))\mbm{1}[Y_K(m,G^{(2)}(n))\ge -L])]\\
&= 2\mb{E}[X(G)]\cdot \mb{E}[\exp(-2Y_K(m,G^{(1)})\mbm{1}[Y_K(m,G^{(1)})\ge -L])]\\
&\lesssim \mb{E}[X(G)] \le n^{-\Omega(1)}\mb{E}[X(G)]^2.
\end{align*}
We used \cref{eq:conditional-1} of \cref{lem:clt} with an appropriate test function $g$ (an exponential of a linear function with smooth cutoff) to bound the quantity $\mb{E}[\exp(-2Y_K(m,G^{(1)})\mbm{1}[Y_K(m,G^{(1)})\ge -L])]$ by a constant, similar to in the proof of \cref{eq:first-moment}. The final inequality follows since $\tau_\mr{pre}$ is such that $\mb{E}X(G)=n^{\Theta(\eta)}$ is of polynomial size.

Finally we handle the case where $|t|\le n^{1/2}\log n$. For the sake of brevity, we define $\wt{Y}(t)=\mb{E}[\exp(-2Y_K(m,G^{(2)}(t))\mbm{1}[Y_K(m,G^{(2)}(t))\ge -L])]$. Now
\begin{align*}
&\sum_{\substack{|t|\le n^{1/2}\log n\\ (n+t)/2\in \mb{Z}}}2^n\binom{n}{(n+t)/2}\mb{P}[v_n,v_t\in S_{\tau_{\on{pre}}}(G)]\wt{Y}(t) \\
&= \sum_{\substack{|t|\le n^{1/2}\log n\\ (n+t)/2\in \mb{Z}}}\frac{4^n\sqrt{2}}{\sqrt{\pi n}}\exp\bigg(-\frac{t^2}{2n} \pm n^{-1/2}\bigg)q\bigg(\frac{n+t}{2n}\bigg)^{m}\wt{Y}(t)\\
&= q(1/2)^{m-\alpha_c n}\sum_{\substack{|t|\le n^{1/2}\log n\\ (n+t)/2\in \mb{Z}}}\frac{4^n\sqrt{2}}{\sqrt{\pi n}}\exp\bigg(-\frac{t^2}{2n} \pm n^{-1/4}\bigg)q\bigg(\frac{n+t}{2n}\bigg)^{\alpha_c n}\wt{Y}(t)\\
&= q(1/2)^{m-\alpha_c n}\sum_{\substack{|t|\le n^{1/2}\log n\\ (n+t)/2\in \mb{Z}}}\frac{4^n\sqrt{2}}{\sqrt{\pi n}}\exp\bigg(-\frac{t^2}{2n} \pm n^{-1/4}\bigg)\bigg(q\bigg(\frac{1}{2}\bigg)+\frac{t^2q''(1/2)}{8n^2} \pm n^{-5/4}\bigg)^{\alpha_c n}\wt{Y}(t)\\
&= q(1/2)^{m-\alpha_c n}\sum_{\substack{|t|\le n^{1/2}\log n\\ (n+t)/2\in \mb{Z}}}\frac{4^n\sqrt{2}}{\sqrt{\pi n}}\exp\bigg(-\frac{t^2}{2n} \pm n^{-1/6}\bigg)\bigg(q\bigg(\frac{1}{2}\bigg)+\frac{t^2q''(1/2)}{8n^2}\bigg)^{\alpha_c n}\wt{Y}(0).
\end{align*}
We used \cref{clm:entropy} in the first line, used that $q(\cdot)$ has bounded first, second, third derivatives in the neighborhood of $1/2$ in the second and third lines, and that $\wt{Y}(t)=(1\pm n^{-2/11})\wt{Y}(0)$ (which is an immediate consequence of \cref{lem:clt} and an argument identical to that in the proof of \cref{eq:first-moment}) in the final line.

Continuing,
\begin{align*}
q(1/2)^{m-\alpha_c n}&\sum_{\substack{|t|\le n^{1/2}\log n\\ (n+t)/2\in \mb{Z}}}\frac{4^n\sqrt{2}}{\sqrt{\pi n}}\exp\bigg(-\frac{t^2}{2n} \pm n^{-1/6}\bigg)\bigg(q\bigg(\frac{1}{2}\bigg)+\frac{t^2q''(1/2)}{8n^2}\bigg)^{\alpha_c n}\wt{Y}(0)\\
&= 4^{n}p^{2m}\wt{Y}(0)\sum_{\substack{|t|\le n^{1/2}\log n\\ (n+t)/2\in \mb{Z}}}\frac{\sqrt{2}}{\sqrt{\pi n}}\exp\bigg(-\frac{t^2}{2n} \pm n^{-1/6}\bigg)\bigg(1+\frac{t^2q''(1/2)}{8n^2p^2}\bigg)^{\alpha_c n}\\
&= 4^{n}p^{2m}\wt{Y}(0)\sum_{\substack{|t|\le n^{1/2}\log n\\ (n+t)/2\in \mb{Z}}}\frac{\sqrt{2}}{\sqrt{\pi n}}\exp\bigg(-\frac{t^2}{2n} +\frac{t^2q''(1/2)\alpha_c}{8np^2}\pm n^{-2/13}\bigg)\\
&\le(1+n^{-1/7})\mb{E}[X(G)]^2\wt{Y}(0)\sum_{\substack{t\in \mb{Z}}}\frac{1}{\sqrt{2\pi n}}\exp\bigg(-\frac{t^2}{2n} +\frac{t^2q''(1/2)\alpha_c}{8np^2}\bigg)\\
&=(1+n^{-1/7})\mb{E}[X(G)]^2\wt{Y}(0)\sum_{\substack{t\in \mb{Z}}}\frac{1}{\sqrt{2\pi n}}\exp\bigg(-\frac{t^2}{2n} +\frac{t^2\kappa^2e^{-\kappa^2}\alpha_c}{\pi np^2}\bigg)\\
&=(1+n^{-1/7})\mb{E}[X(G)]^2\wt{Y}(0)\sum_{\substack{t\in \mb{Z}}}\frac{1}{\sqrt{2\pi n}}\exp\bigg(-\frac{t^2}{2n} +\frac{t^2\alpha_c(1-\mu_2)^2}{2n}\bigg)\\
&=(1+n^{-1/7})\mb{E}[X(G)]^2\wt{Y}(0)\sum_{\substack{t\in \mb{Z}}}\frac{1}{\sqrt{2\pi n}}\exp\bigg(-\frac{t^2(1-4\beta^2)}{2n}\bigg)\\
&=(1\pm n^{-1/8})\mb{E}[X(G)]^2\wt{Y}(0)\frac{1}{\sqrt{2\pi}}\int_{-\infty}^{\infty}\exp\bigg(-\frac{x^2(1-4\beta^2)}{2}\bigg)~dx\\
&=(1\pm n^{-1/8})\mb{E}[X(G)]^2\wt{Y}(0)\frac{1}{\sqrt{1-4\beta^2}}
\end{align*}
where we have used that $q(1/2) = p^2$, the values of $q''(1/2),(1-\mu_2)p$ and identities relating this value to $\beta$ via \cref{S1,S6} of \cref{lem:computation}, and finally using the Euler--Maclaurin formula to transfer from the final sum to an integral.

The final remaining task is therefore to bound
\[\wt{Y}(0) = \mb{E}[\exp(-2Y_K(m,G^{(2)}(0))\mbm{1}[Y_K(m,G^{(2)}(0))\ge-L])].\] Note that 
$Y_K(m,G^{(2)}(0))$ is approximately normally distributed as $Z\sim\mc{N}(\mu^\ast,\sigma^2)$ with 
\[\mu^\ast = \sum_{k=1}^K\frac{(2\beta)^k}{\sqrt{2k}}\cdot \frac{2(2\beta)^k}{\sqrt{2k}}-\sum_{k=1}^K\frac{(2\beta)^{2k}}{4k} = \sum_{k=1}^K\frac{3(2\beta)^{2k}}{4k},\qquad\sigma^2 = \sum_{k=1}^K\frac{(2\beta)^{2k}}{2k}.\]
Via an argument essentially identical to that in the proof of \cref{eq:first-moment}, we have that 
\begin{align*}
\wt{Y}(0) &= (1\pm n^{-1/4})\mb{E}[\exp(-2Z)]\\
&= (1\pm n^{-1/4})\exp(-2\mu^\ast+2\sigma^2)\\
&= (1\pm n^{-1/4})\exp\bigg(-\sum_{k=1}^K\frac{(2\beta)^{2k}}{2k}\bigg)\\
&= (1\pm n^{-\Omega(1)})\sqrt{1-4\beta^2}.
\end{align*}

Finally, putting everything together we have
\[\mb{E}[X(G)^2\exp(-2Y_K(m,G)\mbm{1}[Y_K(m,G)\ge -L])]\le n^{-\Omega(1)}\mb{E}X(G)^2+e^{-(\log n)^{4/3}}+(1\pm n^{-\Omega(1)})\mb{E}X(G)^2,\]
so the desired result follows immediately. 
\end{proof}

\bibliographystyle{amsplain0.bst}
\bibliography{main.bib}

\end{document}